\newcommand{\N}{{\mathds{N}}}
\newcommand{\R}{{\mathds{R}}}
\newcommand{\D}{{\mathfrak{D}}}
\newcommand{\A}{{\mathfrak{A}}}
\newcommand{\B}{{\mathfrak{B}}}
\newcommand{\Lip}{{\mathsf{L}}}
\newcommand{\Hilbert}{{\mathscr{H}}}
\newcommand{\qpropinquity}[1]{{\mathsf{\Lambda}_{#1}}}
\newcommand{\Kantorovich}[1]{{\mathsf{mk}_{#1}}}
\newcommand{\Haus}[1]{{\mathsf{Haus}_{#1}}}
\newcommand{\StateSpace}{{\mathscr{S}}}
\newcommand{\mongekant}{{Mon\-ge-Kan\-to\-ro\-vich metric}}
\newcommand{\Lqcms}{{\JLL} quantum compact metric space}
\newcommand{\Qqcms}[1]{{$#1$}-quasi-Leibniz quantum compact metric space}
\newcommand{\gQqcms}{quasi-Leibniz quantum compact metric space}
\newcommand{\qcms}{quantum compact metric space}
\newcommand{\unit}{1}
\newcommand{\sa}[1]{{\mathfrak{sa}\left({#1}\right)}}
\newcommand{\inner}[2]{{\left<{#1},{#2}\right>}}
\newcommand{\JLL}{Lei\-bniz}
\newcommand{\dom}[1]{{\operatorname*{dom}({#1})}}
\newcommand{\diam}[2]{{\mathrm{diam}\left({#1},{#2}\right)}}
\newcommand{\opnorm}[2]{{\left|\mkern-1.5mu\left|\mkern-1.5mu\left| {#1} \right|\mkern-1.5mu\right|\mkern-1.5mu\right|_{#2}}}
\newcommand{\bridgelength}[2]{{\lambda\left({#1}\middle|{#2}\right)}}
\newcommand{\LipschitzD}{{\mathsf{LipD}}}
\newcommand{\alg}[1]{{\mathfrak{#1}}}
\newcommand{\norm}[2]{{\left\|{#1}\right\|_{#2}}}
\theoremstyle{plain}
\newtheorem{theorem}{Theorem}[section]
\newtheorem{claim}[theorem]{Claim}
\newtheorem{step}{Step}
\newtheorem{corollary}[theorem]{Corollary}
\newtheorem{lemma}[theorem]{Lemma}
\newtheorem{proposition}[theorem]{Proposition}
\newtheorem{theorem-definition}[theorem]{Theorem-Definition}
\theoremstyle{definition}
\newtheorem{definition}[theorem]{Definition}
\newtheorem{notation}[theorem]{Notation}
\newtheorem{convention}[theorem]{Convention}
\theoremstyle{remark}
\newtheorem{example}[theorem]{Example}
\newtheorem{remark}[theorem]{Remark}
\renewcommand{\geq}{\geqslant}
\renewcommand{\leq}{\leqslant}
\newcommand{\HausLip}[1]{{\mathrm{Haus}^\circ_{#1}}}
\numberwithin{equation}{section}
\begin{document}

\title{Equivalence of Quantum Metrics with a common domain}
\author{Fr\'{e}d\'{e}ric Latr\'{e}moli\`{e}re}
\thanks{This work is part of the project supported by the grant H2020-MSCA-RISE-2015-691246-QUANTUM DYNAMICS}
\email{frederic@math.du.edu}
\urladdr{http://www.math.du.edu/\symbol{126}frederic}
\address{Department of Mathematics \\ University of Denver \\ Denver CO 80208}

\date{\today}
\subjclass[2000]{Primary:  46L89, 46L30, 58B34.}
\keywords{Noncommutative metric geometry, Gromov-Hausdorff convergence, Monge-Kantorovich distance, Quantum Metric Spaces, Lip-norms}

\begin{abstract}
We characterize Lipschitz morphisms between quantum compact metric spaces as those *-morphisms which preserve the domain of certain noncommutative analogues of Lipschitz seminorms, namely lower semi-continuous Lip-norms. As a corollary, lower semi-continuous Lip-norms with a shared domain are in fact equivalent. We then note that when a family of lower semi-continuous Lip-norms are uniformly equivalent, then they give rise to totally bounded classes of quantum compact metric spaces, and we apply this observation to several examples of perturbations of quantum metric spaces. We also construct the noncommutative generalization of the Lipschitz distance between quantum compact metric spaces.
\end{abstract}
\maketitle

\tableofcontents


\newcommand{\Aut}[1]{{\mathrm{Aut}\left({#1}\right)}}

\section{Introduction}

Quantum metrics on C*-algebras, formally provided by generalized Lipschitz seminorms called \emph{Lip-norms} \cite{Rieffel98a,Rieffel99}, are the seeds for a new analytic framework which brings techniques from metric geometry in C*-algebra theory and provides a new tool set for problems from mathematical physics, such as finite dimensional approximations of quantum space-time \cite{Rieffel00,Rieffel01,Latremoliere05,Latremoliere13c,Rieffel10c,Rieffel15,Latremoliere16}, or perturbations of quantum metrics \cite{Latremoliere15b,Latremoliere15c}. Quantum compact metric spaces form a natural category \cite{Latremoliere15b}, whose morphisms are Lipschitz in an appropriate sense. In this paper, we prove that any *-morphism between two quantum compact metric spaces is actually Lipschitz if and only if it is compatible with the domains of the Lip-norms. In particular, *-automorphisms which preserve the domain of a Lip-norm must be bi-Lipschitz, and thus all Lip-norms with a common domain are actually equivalent. We then explore three related problems: we show that the topology of pointwise convergence on the group of Lipschitz automorphisms of a quantum compact metric space may be metrized using Lip-norms, and that our previous work on quantum perturbations naturally provide new examples of compact classes of quantum compact metric spaces for the quantum propinquity. We also construct the noncommutative generalization of the Lipschitz distance.

A compact quantum metric space is a generalization of a Lipschitz algebra, inspired by the work of Connes \cite{Connes89, Connes} and formalized by Rieffel \cite{Rieffel98a, Rieffel99}:

\begin{notation}
The space of self-adjoint elements in a C*-algebra $\A$ is denoted by $\sa{\A}$, while the state space of $\A$ is denoted by $\StateSpace(\A)$. The unit of a unital C*-algebra $\A$ is denoted by $\unit_\A$. Last, we denote the norm on a normed vector space $E$ by $\|\cdot\|_E$ by default.

Last, the diameter of a metric space $(E,\mathrm{d})$ is denoted by $\diam{E}{\mathrm{d}}$.
\end{notation}

\begin{convention}\label{conv-1}
  We adopt a convenient convention, when working with seminorms, throughout this paper. We use this convention accross our work: if $L$ is a seminorm defined on a subspace $\dom{L}$ of a vector space $E$, then we set $L(x) = \infty$ for all $x\notin\dom{L}$. Thus, with our convention, $\dom{L} = \{ x \in E : L(x) < \infty \}$.
\end{convention}

\begin{definition}[\cite{Rieffel98a, Rieffel99}]\label{qcms-def}
A pair $(\A,\Lip)$ is a \emph{quantum compact metric space} when $\A$ is a unital C*-algebra and $\Lip$ is a seminorm defined on a dense subspace $\dom{\Lip}$ of $\sa{\A}$, such that:
\begin{enumerate}
\item $\{a\in\dom{\Lip} : \Lip(a) = 0\} = \R\unit_\A$,
\item the \emph{\mongekant} $\Kantorovich{\Lip}$ defined for any two $\varphi,\psi \in \StateSpace(\A)$ by:
\begin{equation*}
\Kantorovich{\Lip}(\varphi,\psi) = \sup\left\{ |\varphi(a) - \psi(a)| : a\in\dom{\Lip}, \Lip(a)\leq 1 \right\}\text{,}
\end{equation*}
metrizes the weak* topology on $\StateSpace(\A)$.
\end{enumerate} 
If $(\A,\Lip)$ is a quantum compact metric space, then $\Lip$ is called a \emph{Lip-norm} on $\A$.
\end{definition}

The classical picture behind Definition (\ref{qcms-def}) is provided by a pair $(C(X),\mathsf{Lip})$ of the C*-algebra of a compact metric space $(X,\mathsf{d})$ and the Lipschitz seminorm $\mathsf{Lip}$ associated to the distance function $\mathsf{d}$. A generalization of quantum compact metric spaces to the quantum locally compact setting was proposed in \cite{Latremoliere05b,Latremoliere12b}. 

We may define a category whose objects are quantum compact metric spaces, and whose morphisms are a special type of *-morphisms between the underlying C*-algebras. There are at least two natural ideas. We may require that a Lipschitz morphism be a *-morphism which is also continuous with respect to the Lip-norms. Formally, if $(\A,\Lip_\A)$  and $(\B,\Lip_\B)$ are two quantum compact metric spaces, and $\varphi : \A\rightarrow\B$ is a *-morphism, this first approach to Lipschitz morphism consists in requiring that there exists $C > 0$ such that $\Lip_\B\circ\varphi \leq C\Lip_\A$. This relation imposes that $\varphi$ must be unital or null. Indeed, $\Lip_\B\circ\varphi(\unit_\A) \leq C \Lip_\A(\unit_\A) = 0$ so $\varphi(\unit_\A)\in\R\unit_\B$; since $\varphi$ is a *-morphism, this leaves us with $\varphi(\unit_\A) \in \{ 0, \unit_\B \}$. In this paper, we will work with unital *-morphisms.

Alternatively, we may require that the dual map associated to a unital *-morphism be a Lipschitz map between the state spaces equipped with their respective {\mongekant}. Continuing with our notations, we would thus ask that there exists $C > 0$ such that for all $\mu, \nu \in \StateSpace(\B)$, we have:
\begin{equation*}
\Kantorovich{\Lip_\A}(\mu\circ\varphi, \nu\circ\varphi) \leq C \Kantorovich{\Lip_\B}(\mu, \nu)\text{.}
\end{equation*}
In general, these two notions of a Lipschitz morphisms are not equivalent, owing to the fact that the {\mongekant} does not allow the recovery of the Lip-norm from which it was defined. After all, many Lip-norms may give the same {\mongekant}. 

However, among all Lip-norms which provide a given {\mongekant}, there is a particular one: the largest among all of them, which is characterized as being lower semi-continuous with respect to the norm of the underlying C*-algebra. In \cite{Rieffel99}, the study of this problem led to the notion of a closed Lip-norm, though the context there was more general (the underlying space was not a C*-algebra but a more general object called an order-unit space, which may not be complete, leading to some important subtleties).

For our purpose, it is thus natural to work with lower semi-continuous Lip-norms. In this context, our two notions of Lipschitz morphisms coincide. So we summarize our notion by:
\begin{definition}[{\cite{Latremoliere15b}}]
Let $(\A,\Lip_\A)$ and $(\B,\Lip_\B)$ be two quantum compact metric spaces, with $\Lip_\A$ and $\Lip_\B$ lower semi-continuous with respect to the norms of, respectively, $\A$ and $\B$. A unital *-morphism $\varphi : \A \rightarrow \B$ is \emph{$k$-Lipschitz} for some $k \geq 0$ when $\Lip_\B\circ\varphi \leq k \Lip_\A$, or equivalently:
\begin{equation*}
\mu \in \StateSpace(\B) \longmapsto \mu\circ\varphi \in \StateSpace(\A)
\end{equation*}
is a $k$-Lipschitz map from $(\StateSpace(\B),\Kantorovich{\Lip_\B})$ to $(\StateSpace(\A),\Kantorovich{\Lip_\A})$.
\end{definition}

It is easy to check that indeed, composition of Lipschitz morphisms is again Lipschitz, and the identity morphism is $1$-Lipschitz, so we have indeed defined a category. It is also easy to check that a $k$-Lipschitz morphism between two classical compact metric spaces is indeed of the form $f \in C(Y)\mapsto f\circ \theta$ with $\theta : X\rightarrow Y$ is $k$-Lipschitz.

In this paper, we investigate a third approach of Lipschitz morphisms. If $\dom{\Lip_\A}$ and $\dom{\Lip_\B}$ are the domains of $\Lip_\A$ and $\Lip_\B$, then a *-morphism may satisfy $\varphi(\dom{\Lip_\A}) \subseteq \dom{\Lip_\B}$. This appear to be a weaker notion, but as we shall see, it is again equivalent to the notion of a Lipschitz morphism. This reinforces that our notion of a category of quantum compact metric spaces is indeed appropriate.

Our paper then continues with an observation regarding sets of uniformly equivalent Lip-norms. We note that such sets are naturally compact for the quantum propinquity. The quantum Gromov-Hausdorff propinquity \cite{Latremoliere13} is a special member of the family of Gromov-Hausdorff propinquities \cite{Latremoliere13b,Latremoliere14}, which are all noncommutative analogues of the Gromov-Hausdorff distance \cite{Gromov, Gromov81} extending the topology of the latter to quantum compact metric spaces. The Gromov-Hausdorff propinquity provides a framework for the geometric study of classes of quantum compact metric spaces. We have established, for instance, the continuity of various natural families of C*-algebras such as quantum tori \cite{Latremoliere13c} or certain AF-algebras \cite{Latremoliere15d}. We constructed finite dimensional approximations for these spaces as well, answering some informal statements in mathematical physics. Another recent advance was the generalization of Gromov's theorem to our propinquity \cite{Latremoliere15}, providing us with an insight into the topological properties of various sets of compact quantum metric spaces.

In general, proving that a class of quantum compact metric spaces is indeed totally bounded for the propinquity may be subtle. For instance, \cite{Latremoliere15} relies on finite dimensional approximations, which are themselves a challenge. Nonetheless, as seen in \cite[Theorem 6.3]{Latremoliere15d}, our generalized Gromov's theorem can be put to use. Another approach taken in \cite{Latremoliere15d} was the construction of continuous maps from some compact spaces onto classes of quantum compact metric spaces; this method also applies to quantum tori and their finite dimensional approximations \cite{Latremoliere05,Latremoliere13c} and we shall see that it applies to conformal deformations as well in this paper. In this paper, we take yet another road to establish the compactness of some interesting classes of quantum compact metric spaces, obtained as perturbations of given quantum metrics.

Prior to the introduction of noncommutative Gromov-Hausdorff distances \cite{Rieffel00,kerr02,Latremoliere13,Latremoliere13b,Latremoliere14}, the idea of perturbations for a quantum metrics seemed to largely rely on the informal idea that certain algebraic expressions are qualitatively close to some original metric. We recently formalized this idea by actually establishing bounds on how far, in the sense of the propinquity, certain particular perturbations actually are. Examples of such perturbations include conformal deformations \cite{Ponge14} of quantum metrics arising from certain spectral triples \cite{Latremoliere15b}, leading to twisted spectral triples introduced by Connes and Moscovici \cite{Connes08} . We also brought curved quantum tori of Sitarz and D\k{a}browski \cite{Sitarz13,Sitarz15} into our program \cite{Latremoliere15c}. We shall see that a core ingredient of the constructions of these perturbations provide a uniform equivalence between Lip-norms, which in turn gives a compactness result.

We note that, besides the Gromov-Hausdorff distance, a standard extended metric between compact metric spaces is the Lipschitz distance. We provide in this paper a construction for the noncommutative version of the Lipschitz distance, which fits very well with the picture of Lipschitz morphisms presented in this paper.

The last section of this paper presents a natural metric on Lipschitz morphisms, built from quantum metrics.

\section{Equivalence of Lip-norms and Lipschitz morphisms}

Quantum metrics defined on the same domain are, in fact, equivalent, under a natural technical condition, which may always be assumed to no cost to the underlying metric structure. This observation is the subject of the following theorem, and constitutes our main result.

\begin{convention}
  Let $L$ be a seminorm defined on a dense subspace $\dom{L}$ of a normed vector space $E$. By Convention (\ref{conv-1}), we regard $L$ as a $[0,\infty]$-function over $E$. Now, we say that the seminorm $L$ is \emph{lower semicontinuous} on $E$ when $\left\{ x \in \dom{L} : L(x) \leq 1 \right\}$ is closed in $E$. Note that this is \emph{stronger} than requiring $L$ to be lower semicontinuous as a function from $dom{L}$.
\end{convention}

\begin{theorem}\label{equiv-thm}
  Let $(\A,\Lip)$ be a quantum compact metric space, where the domain of $\Lip$ is denoted by $\dom{\Lip}$, and where $\Lip$ is lower semi-continuous on $\sa{\A}$, i.e.
  \begin{equation*}
    \left\{ a\in\dom{\Lip} : \Lip(a) \leq 1 \right\}
  \end{equation*}
  is closed in $\A$.
  
  Let $\mathsf{S}$ be a seminorm on $\dom{\Lip}$ such that:
\begin{enumerate}
\item $\mathsf{S}$ is lower semi-continuous on $\sa{\A}$ with respect to $\|\cdot\|_\A$, i.e. $\{ a \in \dom{\mathsf{S}} : \mathsf{S}(a)\leq 1\}$ is a closed subset of $(\A,\norm{\cdot}{\A})$,
\item $\mathsf{S}(\unit_\A) = 0$.
\end{enumerate}
Then there exists $C > 0$ such that for all $a\in\dom{\Lip}$:
\begin{equation*}
\mathsf{S}(a) \leq C \Lip(a)\text{.}
\end{equation*}
\end{theorem}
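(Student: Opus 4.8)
The plan is to first upgrade the two lower semicontinuity hypotheses into a single continuity estimate of the shape $\mathsf{S}(a) \leq C_1 \|a\|_\A + C_2 \Lip(a)$ by a Baire category argument, and then to absorb the spurious $\|\cdot\|_\A$ term by exploiting that both seminorms are insensitive to adding multiples of $\unit_\A$. As a preliminary remark, since $\mathsf{S}$ is a seminorm with $\mathsf{S}(\unit_\A) = 0$, it vanishes on all of $\R\unit_\A$, exactly as $\Lip$ does by axiom (1) of Definition \ref{qcms-def}.

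The central point is that the graph norm $a \mapsto \|a\|_\A + \Lip(a)$ turns $\dom{\Lip}$ into a Banach space. Indeed, a sequence $(a_n)$ that is Cauchy for this norm is $\|\cdot\|_\A$-Cauchy, hence converges in $\sa{\A}$ to some $a$, and lower semicontinuity of $\Lip$ then forces $a\in\dom{\Lip}$ with $\Lip(a_n - a)\to 0$. On this Banach space I would view $\mathsf{S}$ as a finite-valued, lower semi-continuous seminorm: its sublevel sets $\{a : \mathsf{S}(a)\leq n\}$ are $\|\cdot\|_\A$-closed, a fortiori closed for the finer graph-norm topology. These sublevel sets are closed, convex, symmetric, and cover $\dom{\Lip}$ (as $\mathsf{S}$ is finite there), so by the Baire category theorem one of them, say $\{a : \mathsf{S}(a)\leq N\}$, has nonempty interior; convexity together with symmetry then forces it to contain a graph-norm ball about $0$. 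Rescaling yields a constant (here $C_1 = C_2 = N/r$ for an appropriate radius $r$) with $\mathsf{S}(a) \leq C_1\|a\|_\A + C_2\Lip(a)$ for every $a\in\dom{\Lip}$.

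Finally, to delete the $\|\cdot\|_\A$ term I would translate by scalars. For any $a$ and any $\lambda\in\R$ we have $\mathsf{S}(a) = \mathsf{S}(a-\lambda\unit_\A) \leq C_1\|a-\lambda\unit_\A\|_\A + C_2\Lip(a)$, so minimizing over $\lambda$ replaces $\|a\|_\A$ by the quotient norm $\inf_\lambda\|a-\lambda\unit_\A\|_\A$. For self-adjoint $a$ this infimum is half the length of the spectrum of $a$, which is controlled by $\Lip$: from $\varphi(a)-\psi(a)\leq\Kantorovich{\Lip}(\varphi,\psi)\Lip(a)\leq\diam{\StateSpace(\A)}{\Kantorovich{\Lip}}\Lip(a)$ for all states $\varphi,\psi$ one gets $\inf_\lambda\|a-\lambda\unit_\A\|_\A \leq \tfrac12\diam{\StateSpace(\A)}{\Kantorovich{\Lip}}\,\Lip(a)$, the diameter being finite since $\Kantorovich{\Lip}$ metrizes the compact weak* topology. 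Combining the two bounds gives $\mathsf{S}(a)\leq C\Lip(a)$ with $C = \tfrac12 C_1\diam{\StateSpace(\A)}{\Kantorovich{\Lip}} + C_2$.

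I expect the Baire category step to be the crux: it is exactly there that lower semicontinuity of both seminorms is converted into a genuine upper bound, and it depends essentially on the completeness of the graph norm, hence on the hypothesis that $\Lip$ itself is lower semi-continuous. By contrast, the scalar-translation step is routine once the finiteness of $\diam{\StateSpace(\A)}{\Kantorovich{\Lip}}$ is invoked.
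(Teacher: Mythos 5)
Your proof is correct, and its overall skeleton matches the paper's: both arguments make $\dom{\Lip}$ into a Banach space under the graph norm $\|\cdot\|_\A + \Lip$ (using lower semicontinuity of $\Lip$ exactly as you do), and both finish with the identical scalar-translation trick, exploiting $\Lip(a+t\unit_\A)=\Lip(a)$ and $\mathsf{S}(a+t\unit_\A)=\mathsf{S}(a)$ together with the finiteness of $D=\diam{\StateSpace(\A)}{\Kantorovich{\Lip}}$ to absorb the $\|\cdot\|_\A$ term (the paper takes $t=-\mu(a)$ for a fixed state $\mu$ to get $\|a+t\unit_\A\|_\A\leq D\Lip(a)$; your quotient-norm estimate with the factor $\tfrac12$ is a slightly sharper form of the same bound). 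Where you genuinely diverge is the middle step. The paper introduces a second graph norm $\|\cdot\|_{\mathsf{S}}=\|\cdot\|_\A+\mathsf{S}$, argues that it too is complete on the common domain, checks that the sum norm $\|\cdot\|_{\Lip}+\|\cdot\|_{\mathsf{S}}$ is complete, and then invokes the open mapping theorem to get equivalence of the two graph norms. You instead apply Baire category directly in the single Banach space $(\dom{\Lip},\|\cdot\|_{\Lip})$ to the sublevel sets $\{a:\mathsf{S}(a)\leq n\}$, which are relatively closed there by hypothesis (1) and are convex, symmetric, and cover the space; this is a closed-graph-style argument rather than a two-norm comparison. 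Since the open mapping theorem is itself proved by Baire category, the two routes are cousins, but yours is more economical on hypotheses: you never need $(\dom{\Lip},\|\cdot\|_{\mathsf{S}})$ to be complete — a point at which the paper's Step 1, applied to $\mathsf{S}$, tacitly requires the $\|\cdot\|_\A$-limit of a $\|\cdot\|_{\mathsf{S}}$-Cauchy sequence from $\dom{\Lip}$ to land back in $\dom{\Lip}$, i.e. that the finiteness domain of an l.s.c. extension of $\mathsf{S}$ not spill outside $\dom{\Lip}$ — whereas your argument uses only relative closedness of the sublevel sets inside $\dom{\Lip}$, which is literally what hypothesis (1) provides. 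What the paper's route buys in exchange is the stronger, symmetric conclusion that the two graph norms are equivalent when the seminorms share the same finiteness domain; for the theorem as stated, which is a one-sided bound, your version suffices and is self-contained.
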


\begin{proof}
Our proof proceeds in three steps. First, we prove that the domain of a lower semi-continuous seminorm can be made naturally into a Banach space. Then, we use the open mapping theorem to show that different lower semi-continuous seminorms defined on the same domain give rise to equivalent Banach norms with the construction in our first step. Last, we conclude our theorem.

\begin{step}
  Let $\mathsf{S}$ be a seminorm defined on some dense subspace $\dom{\mathsf{S}}$ of $\sa{\A}$, such that $\{a\in\dom{\mathsf{S}}:\mathsf{S}(a)\leq 1\}$ is closed in $\sa{\A}$. Let $\|\cdot\|_{\mathsf{S}} = \|\cdot\|_\A + {\mathsf{S}}$. We first check that $\dom{{\mathsf{S}}}$ is a Banach space for the norm $\|\cdot\|_{\mathsf{S}}$. 
\end{step}

It is straightforward that $\|\cdot\|_{\mathsf{S}}$ is a norm on $\dom{{\mathsf{S}}}$.

Let $(a_n)_{n\in\N}$ be a Cauchy sequence in $\dom{{\mathsf{S}}}$ for $\|\cdot\|_{\mathsf{S}}$. Thus $(a_n)_{n\in\N}$ is a Cauchy sequence for $\|\cdot\|_\A$, which is complete, so $(a_n)_{n\in\N}$ converges to some $a \in \sa{\A}$ for $\|\cdot\|_\A$. 

We also observe that $(a_n)_{n\in\N}$ is Cauchy, hence bounded, for ${\mathsf{S}}$; thus there exists $M > 0$ such that ${\mathsf{S}}(a_n)\leq M$ for all $n\in\N$. As $\{ a \in \dom{\mathsf{S}} : \mathsf{S}(a)\leq M \}$ is closed in $\A$, we conclude that $a \in \dom{\mathsf{S}}$ and $\mathsf{S}(a)\leq M$ (alternatively, with our convention that $\mathsf{S}(a) \in [0,\infty)$, our assumption on the unit ball of $\mathsf{S}$ is equivalent to requiring $\mathsf{S}$ to be lower semicontinuous on $\A$ as a $[0,\infty]$-valued function, and thus we obtain that ${\mathsf{S}}(a) \leq \liminf_{n\rightarrow\infty} {\mathsf{S}}(a_n) \leq M$; thus $a\in \dom{{\mathsf{S}}}$).

Let $\varepsilon > 0$. Since $(a_n)_{n\in\N}$ is Cauchy for ${\mathsf{S}}$, there exists $N\in\N$ such that for all $p,q\geq N$ we have ${\mathsf{S}}(a_p-a_q) \leq \varepsilon$. Since ${\mathsf{S}}$ is lower semi-continuous with respect to $\|\cdot\|_\A$, we thus have, for all $p\geq N$:
\begin{equation*}
{\mathsf{S}}(a-a_p)\leq \liminf_{q\rightarrow\infty} {\mathsf{S}}(a_p-a_q) \leq \varepsilon\text{.}
\end{equation*}
Thus $\lim_{p\rightarrow\infty} {\mathsf{S}}(a-a_p) = 0$. This proves that $\lim_{n\rightarrow\infty} \|a-a_n\|_{\mathsf{S}} = 0$ as desired.

\begin{step}
If $\Lip$ and $\mathrm{S}$ be two lower semi-continuous seminorms on some common dense subspace $\dom{\Lip}$ of $\sa{\A}$, then the norms:
\begin{equation*}
\|\cdot\|_{\Lip} = \|\cdot\|_\A + \Lip \text{ and }\|\cdot\|_{\mathsf{S}} = \|\cdot\|_\A + \mathsf{S}
\end{equation*}
are equivalent.
\end{step}

The norms $\|\cdot\|_{\Lip}$  and $\|\cdot\|_{\mathsf{S}}$ both make $\dom{\Lip}$ into a Banach space, by step 1.

We begin with a simple observation. Let $(x_n)_{n\in\N}$ be a sequence in $\dom{\Lip}$ which converges for both $\|\cdot\|_{\Lip}$ and $\|\cdot\|_{\mathsf{S}}$. Let $x \in \dom{\Lip}$ be the limit of $(x_n)_{n\in\N}$ for $\|\cdot\|_{\Lip}$ and $y \in \dom{\Lip}$ be the limit of $(x_n)_{n\in\N}$ for $\|\cdot\|_{\mathsf{S}}$. We note that in particular, $(x_n)_{n\in\N}$ converges to both $x$ and $y$ for $\|\cdot\|_\A$. Thus $x=y$.

Let now $\|\cdot\|_\ast = \|\cdot\|_{\Lip} + \|\cdot\|_{\mathsf{S}}$. If $(x_n)_{n\in\N}$ is a Cauchy sequence for $\|\cdot\|_\ast$, then the sequence $(x_n)_{n\in\N}$ is Cauchy for both $\|\cdot\|_{\Lip}$ and $\|\cdot\|_{\mathsf{S}}$ and thus converges for both these norms, since they are complete; by our previous observation, $(x_n)_{n\in\N}$ has the same limit $x\in\dom{\Lip}$ for both these norms. Hence, $(x_n)_{n\in\N}$ converges to $x$ for $\|\cdot\|_\ast$, i.e. $\left(\dom{\Lip},\|\cdot\|_\ast\right)$ is a Banach space.

Now, since $\|\cdot\|_{\Lip}\leq \|\cdot\|_\ast$, the open mapping theorem \cite{Brezis83} implies that there exists $k > 0$ such that $\|\cdot\|_\ast\leq k\|\cdot\|_{\Lip}$. We then conclude easily that $\|\cdot\|_{\mathsf{S}}\leq k \|\cdot\|_{\Lip}$. Similarly, for some $k' > 0$, we have $\|\cdot\|_{\Lip}\leq k' \|\cdot\|_{\mathsf{S}}$.

\begin{step}
We now conclude our theorem.
\end{step}
We thus are given a lower semi-continuous Lip-norm $\Lip$ on $\A$, and some seminorm $\mathrm{S}$ on the domain of $\Lip$, with $\mathrm{S}(\unit_\A) = 0$ and $\mathrm{S}$ lower semi-continuous with respect to $\|\cdot\|_\A$.

Using our previous step, there exists $k > 0$ such that for all $a\in\dom{\Lip}$, we have:
\begin{equation*}
\begin{split}
\mathsf{S}(a) &= \|a\|_{\mathsf{S}} - \|a\|_\A \\
&\leq k\|a\|_{\Lip} - \|a\|_\A \\
&\leq k\left(\|a\|_\A + \Lip(a)\right) - \|a\|_\A \\
&= (k-1)\|a\|_\A + k\Lip(a)\text{.}
\end{split}
\end{equation*}
Let $D = \diam{\StateSpace(\A)}{\Kantorovich{\Lip}}$ (which is finite since $\Lip$ is a Lip-norm). Let $a\in\dom{\Lip_\A}$. As $\Lip$ is a Lip-norm, there exists $t\in\R$ such that:
\begin{equation}\label{equiv-thm-exp1}
\|a+t\unit_\A\|_\A \leq D \Lip(a)\text{.}
\end{equation}
Indeed, let $\mu\in\StateSpace(\A)$. For all $\nu\in\StateSpace(\A)$ then:
\begin{equation*}
|\mu(a)-\nu(a)|\leq \Lip(a)\Kantorovich{\Lip}(\mu,\nu) \leq \Lip(a)D\text{.}
\end{equation*}

Of course, $\Lip(a+t\unit_\A) = \Lip(a)$ and $\mathsf{S}(a+t\unit_\A) = \mathsf{S}(a)$ since $\mathsf{S}(\unit_\A) = 0$. Thus:
\begin{equation*}
\begin{split}
\mathsf{S}(a) &= \mathsf{S}(a + t\unit_\A) \\
&\leq (k-1)\|a + t\unit_\A\|_\A + k \Lip(a + t\unit_\A) \\
&\leq (k-1) D \Lip(a) + k\Lip(a) = ((k-1)D + k) \Lip(a)\text{.}
\end{split} 
\end{equation*}

This concludes our proof, with $C = ((k-1)D+k)$.
\end{proof}

Theorem (\ref{equiv-thm}) has the following consequences:

\begin{corollary}\label{Lipschitz-cor}
Let $(\A,\Lip_\A)$ and $(\B,\Lip_\B)$ be two compact quantum metric spaces whose Lip-norms are lower semi-continuous on, respectively, $\sa{\A}$ and $\sa{\B}$. Let $\varphi : \A\rightarrow\B$ be some unital *-morphism such that $\varphi(\dom{\Lip_\A}) \subseteq \dom{\Lip_\B}$. Then there exists $C > 0$ such that:
\begin{equation*}
\Lip_\B\circ\varphi \leq C \Lip_\A
\end{equation*}
i.e. $\varphi$ is a $C$-Lipschitz morphism.
\end{corollary}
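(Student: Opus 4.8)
The plan is to reduce the statement to Theorem~(\ref{equiv-thm}) by transporting the Lip-norm $\Lip_\B$ back along $\varphi$. Concretely, I would define a seminorm $\mathsf{S}$ on $\dom{\Lip_\A}$ by setting $\mathsf{S}(a) = \Lip_\B(\varphi(a))$ for every $a \in \dom{\Lip_\A}$. This is well defined precisely because of the hypothesis $\varphi(\dom{\Lip_\A}) \subseteq \dom{\Lip_\B}$, which guarantees $\varphi(a) \in \dom{\Lip_\B}$ so that $\Lip_\B(\varphi(a))$ makes sense; moreover $\varphi$ maps $\sa{\A}$ into $\sa{\B}$ since it is a *-morphism, so $\mathsf{S}$ is genuinely a seminorm on the self-adjoint domain $\dom{\Lip_\A}$, being the composition of the linear map $\varphi$ with the seminorm $\Lip_\B$. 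Once $\mathsf{S}$ is seen to satisfy the two hypotheses of Theorem~(\ref{equiv-thm}), that theorem furnishes a constant $C > 0$ with $\mathsf{S} \leq C \Lip_\A$ on $\dom{\Lip_\A}$, which is exactly the inequality $\Lip_\B\circ\varphi \leq C \Lip_\A$ claimed.

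It then remains to verify the two conditions required of $\mathsf{S}$. The vanishing condition $\mathsf{S}(\unit_\A) = 0$ is immediate: since $\varphi$ is unital, $\varphi(\unit_\A) = \unit_\B$, and $\Lip_\B(\unit_\B) = 0$ because $\unit_\B \in \R\unit_\B$ and a Lip-norm vanishes exactly on the scalar multiples of the unit by Definition~(\ref{qcms-def}). The same condition guarantees $\unit_\A \in \dom{\Lip_\A}$, so the expression $\mathsf{S}(\unit_\A)$ is legitimate.

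The crux of the argument, and the step I expect to require the most care, is the lower semi-continuity of $\mathsf{S}$ with respect to $\|\cdot\|_\A$. Here I would invoke the fact that any *-morphism between C*-algebras is norm-contractive. Thus if $(a_n)_{n\in\N}$ is a sequence in $\dom{\Lip_\A}$ converging in $\|\cdot\|_\A$ to some $a \in \dom{\Lip_\A}$, then $\varphi(a_n) \to \varphi(a)$ in $\|\cdot\|_\B$ because $\|\varphi(a_n) - \varphi(a)\|_\B \leq \|a_n - a\|_\A$. Since all the elements $\varphi(a_n)$ and $\varphi(a)$ lie in $\dom{\Lip_\B}$ and $\Lip_\B$ is lower semi-continuous with respect to $\|\cdot\|_\B$, we obtain
\begin{equation*}
\mathsf{S}(a) = \Lip_\B(\varphi(a)) \leq \liminf_{n\rightarrow\infty} \Lip_\B(\varphi(a_n)) = \liminf_{n\rightarrow\infty} \mathsf{S}(a_n)\text{,}
\end{equation*}
which is exactly the lower semi-continuity of $\mathsf{S}$. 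With all hypotheses of Theorem~(\ref{equiv-thm}) verified, the conclusion follows at once, and the resulting $C$ is a Lipschitz constant for $\varphi$.
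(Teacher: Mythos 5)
Your proposal is correct and takes essentially the same route as the paper: the paper's proof likewise sets $\mathsf{S} = \Lip_\B\circ\varphi$, notes that it is a lower semi-continuous seminorm finite on $\dom{\Lip_\A}$ with $\mathsf{S}(\unit_\A) = 0$, and invokes Theorem~(\ref{equiv-thm}). The only difference is that you spell out the verification of lower semi-continuity via the norm-contractivity of unital *-morphisms, which the paper leaves implicit.
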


\begin{proof}
Let $\mathsf{S} = \Lip_\B\circ\varphi$. We note that $\mathsf{S}$ is a lower semi-continuous seminorm which takes finite values on $\dom{\Lip_\A}$ by assumption. Moreover $\mathsf{S}(\unit_\A) = 0$. Thus our corollary follows from Theorem (\ref{equiv-thm}).
\end{proof}

\begin{corollary}\label{bi-Lipschitz-cor}
Let $(\A,\Lip_\A)$ and $(\B,\Lip_\B)$ be two quantum compact metric spaces whose Lip-norms are lower semi-continuous, respectively, on $\sa{\A}$ and $\sa{\B}$. If $\varphi : \A\rightarrow\B$ is a *-isomorphism such that $\varphi(\dom{\Lip_\A}) = \dom{\Lip_\B}$, then there exists $C > 0$ such that:
\begin{equation*}
C^{-1} \Lip_\A \leq \Lip_\B\circ\varphi \leq C \Lip_\A\text{.}
\end{equation*}
\end{corollary}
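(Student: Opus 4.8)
The plan is to apply Corollary \ref{Lipschitz-cor} twice: once to $\varphi$ to obtain the upper bound, and once to the inverse *-isomorphism $\varphi^{-1}$ to obtain the lower bound. The key observation is that the hypothesis is symmetric in $\A$ and $\B$, so both $\varphi$ and $\varphi^{-1}$ are admissible morphisms for the previous corollary.

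First I would note that since $\varphi(\dom{\Lip_\A}) = \dom{\Lip_\B}$, in particular $\varphi(\dom{\Lip_\A}) \subseteq \dom{\Lip_\B}$, so Corollary \ref{Lipschitz-cor} applies directly to $\varphi$ and furnishes a constant $C_1 > 0$ with $\Lip_\B \circ \varphi \leq C_1 \Lip_\A$. This is exactly the upper bound we seek.

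For the lower bound, I would observe that $\varphi^{-1} : \B \to \A$ is again a unital *-isomorphism, and that the equality $\varphi(\dom{\Lip_\A}) = \dom{\Lip_\B}$ is equivalent to $\varphi^{-1}(\dom{\Lip_\B}) = \dom{\Lip_\A}$. Hence $\varphi^{-1}$ also satisfies the hypotheses of Corollary \ref{Lipschitz-cor}, now with the roles of $\A$ and $\B$ exchanged, yielding a constant $C_2 > 0$ such that $\Lip_\A \circ \varphi^{-1} \leq C_2 \Lip_\B$. To turn this into a bound on $\Lip_\B \circ \varphi$, I would substitute $b = \varphi(a)$ for an arbitrary $a \in \dom{\Lip_\A}$: since $\varphi^{-1}(\varphi(a)) = a$, the inequality becomes $\Lip_\A(a) \leq C_2 \Lip_\B(\varphi(a))$, that is, $C_2^{-1}\Lip_\A \leq \Lip_\B \circ \varphi$.

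Finally I would set $C = \max\{C_1, C_2\}$, so that $C^{-1}\Lip_\A \leq C_2^{-1}\Lip_\A \leq \Lip_\B \circ \varphi \leq C_1 \Lip_\A \leq C\Lip_\A$, which is the desired conclusion. I do not expect any genuine obstacle here; the entire argument is a bookkeeping exercise combining the two one-sided estimates. The only point requiring minor care is checking that $\varphi^{-1}$ is a legitimate morphism for Corollary \ref{Lipschitz-cor}, namely that it is unital and carries $\dom{\Lip_\B}$ into $\dom{\Lip_\A}$, both of which are immediate consequences of $\varphi$ being a *-isomorphism satisfying the stated domain equality.
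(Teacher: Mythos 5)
Your proof is correct and takes essentially the same route as the paper, whose entire proof is the one-line remark that the statement follows from Corollary~(\ref{Lipschitz-cor}) --- that is, precisely the two-sided application to $\varphi$ and $\varphi^{-1}$ that you spell out. The one point needing care, that $\varphi^{-1}$ is a unital *-morphism with $\varphi^{-1}(\dom{\Lip_\B}) \subseteq \dom{\Lip_\A}$, is handled correctly in your writeup.
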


\begin{proof}
This follows from Corollary (\ref{Lipschitz-cor}).
\end{proof}

\begin{corollary}\label{auto-Lipschitz-cor}
  Let $(\A,\Lip)$ be a quantum compact metric space where $\Lip$ is lower semi-continuous over $\sa{\A}$ and $\sa{\B}$, respectivly. Let $\alpha$ be a *-automorphism of $\A$. The following two assertions are equivalent:
\begin{enumerate}
\item $\alpha(\dom{\Lip}) = \dom{\Lip}$,
\item there exists $C > 0$ such that $C^{-1} \Lip\circ\alpha \leq \Lip \leq C \Lip\circ\alpha$.
\end{enumerate}
\end{corollary}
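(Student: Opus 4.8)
The plan is to deduce this equivalence directly from Corollary \ref{bi-Lipschitz-cor}, viewing $\alpha$ as a *-isomorphism from $\A$ onto itself. Both implications are essentially immediate once one is careful about how the seminorm $\Lip\circ\alpha$ and its domain relate to $\Lip$ and $\dom{\Lip}$.

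For the implication (1) $\Rightarrow$ (2), I would apply Corollary \ref{bi-Lipschitz-cor} to the *-isomorphism $\varphi = \alpha : \A\rightarrow\A$, taking $\Lip_\A = \Lip_\B = \Lip$. The hypothesis $\alpha(\dom{\Lip}) = \dom{\Lip}$ is exactly the condition $\varphi(\dom{\Lip_\A}) = \dom{\Lip_\B}$ required there, so the corollary furnishes a constant $C > 0$ with $C^{-1}\Lip \leq \Lip\circ\alpha \leq C\Lip$. Rearranging these inequalities (multiplying through by $C$ and by $C^{-1}$) yields $C^{-1}\Lip\circ\alpha \leq \Lip \leq C\Lip\circ\alpha$, which is assertion (2) with the same constant $C$.

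For the implication (2) $\Rightarrow$ (1), I would read both $\Lip$ and $\Lip\circ\alpha$ as $[0,+\infty]$-valued seminorms, set to $+\infty$ off their domains, so that $\dom{\Lip\circ\alpha} = \alpha^{-1}(\dom{\Lip})$. Granting the two-sided bound of (2), the inequality $\Lip\circ\alpha \leq C\Lip$ shows that $a\in\dom{\Lip}$ forces $\Lip(\alpha(a)) < \infty$, hence $\alpha(a)\in\dom{\Lip}$, giving $\alpha(\dom{\Lip}) \subseteq \dom{\Lip}$. Symmetrically, the inequality $\Lip \leq C\Lip\circ\alpha$ shows that $\Lip(\alpha(a)) < \infty$ forces $\Lip(a) < \infty$, i.e.\ $\alpha^{-1}(\dom{\Lip}) \subseteq \dom{\Lip}$, which rewrites as $\dom{\Lip} \subseteq \alpha(\dom{\Lip})$. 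Combining the two inclusions yields $\alpha(\dom{\Lip}) = \dom{\Lip}$.

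There is no serious obstacle here, since the analytic content is already carried by Theorem \ref{equiv-thm} and Corollary \ref{bi-Lipschitz-cor}. The only point requiring care is the bookkeeping of domains in (2) $\Rightarrow$ (1): one must make explicit the convention that a lower semi-continuous seminorm is extended by $+\infty$ outside its domain, so that equivalence of the extended seminorms automatically forces equality of the loci where they are finite, namely $\dom{\Lip}$ and $\alpha^{-1}(\dom{\Lip})$. Once this convention is fixed, both directions reduce to one-line deductions.
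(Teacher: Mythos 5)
Your proposal is correct and follows the paper's own route: the implication (1) $\Rightarrow$ (2) is obtained exactly as in the paper by invoking Corollary (\ref{bi-Lipschitz-cor}) with $\varphi = \alpha$, and the implication (2) $\Rightarrow$ (1) by the same direct domain-chasing from the two inequalities, with your explicit $[0,+\infty]$-valued convention merely making precise what the paper leaves implicit. No gap; nothing further is needed.
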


\begin{proof}
Assume (2) first. Let $a\in\dom{\Lip}$. Then:
\begin{equation*}
\Lip(\alpha^{-1}(a)) \leq C \Lip(\alpha(\alpha^{-1}(a))) = \Lip(a) < \infty\text{,}
\end{equation*}
hence $\alpha^{-1}(a) \in \dom{\Lip}$. Thus $a = \alpha(\alpha^{-1}(a)) \in \alpha(\dom{\Lip})$, i.e. $\dom{\Lip} \subseteq \alpha(\dom{\Lip})$. The converse inclusion is proven similarly.

Assume (1). Then (2) follows from Corollary (\ref{bi-Lipschitz-cor}).
\end{proof}

\begin{corollary}\label{equiv-cor}
Let $\A$ be a unital C*-algebra and $\Lip_1$, $\Lip_2$ be two lower semi-continuous Lip-norms on $\A$. The following assertions are equivalent:
\begin{enumerate}
\item $\Lip_1$ and $\Lip_2$ are equivalent,
\item $\dom{\Lip_1} = \dom{\Lip_2}$.
\end{enumerate}
\end{corollary}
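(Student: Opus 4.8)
The plan is to reduce both implications to the machinery already established, treating each Lip-norm as a function on $\sa{\A}$ with values in $[0,+\infty]$ by declaring it to equal $+\infty$ off its domain. Under this convention, saying that $\Lip_1$ and $\Lip_2$ are \emph{equivalent} means that there exist constants $C_1, C_2 > 0$ with $C_1\Lip_1 \leq \Lip_2 \leq C_2 \Lip_1$ everywhere on $\sa{\A}$, and the domain $\dom{\Lip_j}$ is by definition the set on which $\Lip_j$ is finite.

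The implication $(1) \Rightarrow (2)$ should be immediate from this convention. If $C_1\Lip_1 \leq \Lip_2 \leq C_2\Lip_1$, then $\Lip_2(a)$ is finite exactly when $\Lip_1(a)$ is finite, so an element lies in $\dom{\Lip_2}$ if and only if it lies in $\dom{\Lip_1}$; hence $\dom{\Lip_1} = \dom{\Lip_2}$. No real work is needed beyond unwinding the definition of the domain as the locus where the seminorm takes finite values.

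For the substantive direction $(2) \Rightarrow (1)$, I would apply Corollary \ref{bi-Lipschitz-cor} to the identity automorphism. Concretely, take $\B = \A$ with $\Lip_\B = \Lip_2$ and $\Lip_\A = \Lip_1$, and let $\varphi = \mathrm{id}_\A$. This $\varphi$ is a *-isomorphism, and the hypothesis $\dom{\Lip_1} = \dom{\Lip_2}$ is precisely the requirement $\varphi(\dom{\Lip_\A}) = \dom{\Lip_\B}$. Corollary \ref{bi-Lipschitz-cor} then produces $C > 0$ with $C^{-1}\Lip_1 \leq \Lip_2\circ\mathrm{id}_\A = \Lip_2 \leq C\Lip_1$, which is exactly the equivalence of $\Lip_1$ and $\Lip_2$.

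Alternatively, without citing the corollary, I would invoke Theorem \ref{equiv-thm} twice. Applying it with $\Lip = \Lip_1$ and $\mathsf{S} = \Lip_2$ requires checking that $\Lip_2$ is a lower semi-continuous seminorm on $\dom{\Lip_1}$ with $\Lip_2(\unit_\A) = 0$: lower semi-continuity is assumed, the domain condition is hypothesis (2), and $\Lip_2(\unit_\A) = 0$ holds because the kernel of any Lip-norm is $\R\unit_\A$ by Definition \ref{qcms-def}. This yields $\Lip_2 \leq C\Lip_1$, and swapping the roles of $\Lip_1$ and $\Lip_2$ yields the reverse bound, so that the two together give equivalence. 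There is essentially no obstacle in this argument: the only points demanding a moment's care are verifying the normalization $\Lip_2(\unit_\A) = 0$ and making the $[0,+\infty]$ convention for the domains explicit, so that the two-sided inequality defining equivalence and the set equality of domains become genuinely interchangeable.
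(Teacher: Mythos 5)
Your proposal is correct and follows essentially the same route as the paper, whose entire proof is to apply Corollary (\ref{auto-Lipschitz-cor}) (itself a direct consequence of Corollary (\ref{bi-Lipschitz-cor})) to the identity automorphism of $\A$; your invocation of Corollary (\ref{bi-Lipschitz-cor}) with $\varphi = \mathrm{id}_\A$, together with the trivial implication $(1)\Rightarrow(2)$ under the $[0,+\infty]$ convention, is exactly this argument spelled out. Your alternative of applying Theorem (\ref{equiv-thm}) twice is just the same reduction unwound one level further, and your verification that $\Lip_2(\unit_\A)=0$ via Definition (\ref{qcms-def}) is the correct detail to check.
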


\begin{proof}
Apply Corollary (\ref{auto-Lipschitz-cor}) to the identity automorphism of $\A$.
\end{proof}

\section{Compactness of classes of perturbations of {\gQqcms s}}

To present the result in this section, we introduce a simple metric on Lip-norms over a fixed C*-algebra. We will adopt the following terminology, to keep our statements readable.
\begin{convention}
  If $(\A,\Lip)$ is a quantum compact metric space, then we say that $\Lip$ is llower semicontinuous to mean that $\Lip$ is lower semi-continuous, as a $[0,\infty]$-valued function, over $\sa{\A}$.
\end{convention}

\begin{definition}\label{HausLip-def}
For any two lower semi-continuous Lip-norms $\Lip$, $\Lip'$ on a unital C*-algebra $\A$, we define $\HausLip{\A}(\Lip,\Lip')$ by:
\begin{equation*}
\HausLip{\A}(\Lip,\Lip') = \Haus{\|\cdot\|_\A}\left(\left\{a\in \sa{\A} : \Lip(a)\leq 1\right\}, \{a\in \sa{\A}: \Lip'(a)\leq  1\} \right)\text{,}
\end{equation*}
where $\Haus{\|\cdot\|_\A}$ is the Hausdorff distance induced on closed subsets of $\A$ by the norm $\|\cdot\|_\A$.
\end{definition}

We begin by observing that our distance $\HausLip{\A}$ is indeed finite. Let us start by recalling the following fundamental characterization of quantum compact metric spaces proved by Rieffel in \cite{Rieffel98a}, and akin to a noncommutative Arz{\'e}la-Ascoli theorem:

\begin{theorem}[{\cite{Rieffel98a, Rieffel99}}]\label{Rieffel-az-thm}
Let $\A$ be a unital C*-algebra and $\Lip$ a seminorm defined on a dense subspace $\dom{\Lip}$ of $\sa{\A}$ and such that $\{a\in\dom{\Lip}:\Lip(a) = 0\} = \R\unit_\A$. The following assertions are equivalent:
\begin{enumerate}
\item $(\A,\Lip)$ is a quantum compact metric space,
\item $\{a\in\dom{\Lip} : \Lip(a) \leq 1, \varphi(a) = 0\}$ is totally bounded in norm for some state $\varphi\in\StateSpace(\A)$,
\item $\{a\in\dom{\Lip} : \Lip(a) \leq 1, \varphi(a) = 0\}$ is totally bounded in norm for all states $\varphi\in\StateSpace(\A)$,
\item $\{a\in\dom{\Lip} : \Lip(a)\leq 1, \|a\|_\A\leq 1\}$ is totally bounded in norm.
\end{enumerate}
In particular, if $\Lip$ is lower semi-continuous on $\sa{\A}$, then $\Lip$ is a Lip-norm if, and only if any of the sets above are compact.
\end{theorem}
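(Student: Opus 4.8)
The plan is to read all four conditions through the Arzel\`a--Ascoli theorem on the weak*-compact space $\StateSpace(\A)$, exploiting that each $a\in\sa{\A}$ defines a weak*-continuous function $\hat a : \varphi\mapsto\varphi(a)$ with $\|\hat a\|_\infty = \|a\|_\A$. Two facts, valid for any seminorm whose null space is $\R\unit_\A$, will organize everything. First, scaling the defining supremum gives $|\varphi(a)-\psi(a)|\leq\Lip(a)\Kantorovich{\Lip}(\varphi,\psi)$ for all $a\in\dom{\Lip}$, so $\Kantorovich{\Lip}$-convergence always implies weak*-convergence; thus the $\Kantorovich{\Lip}$-topology is always finer than the weak* topology, and the two coincide exactly when the identity map $(\StateSpace(\A),\text{weak*})\to(\StateSpace(\A),\Kantorovich{\Lip})$ is continuous, in which case, the source being weak*-compact and the target Hausdorff, it is automatically a homeomorphism and $\Kantorovich{\Lip}$ is finite. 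Second, since $\psi(\unit_\A)=1$ for every state, the differences $\psi(a)-\rho(a)$ are unchanged when $a$ is replaced by $a-\varphi(a)\unit_\A$, so in every relevant supremum one may restrict to $B_\varphi = \{a\in\dom{\Lip} : \Lip(a)\leq 1,\ \varphi(a)=0\}$; this translation device is what links conditions (2)--(4).

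Next I would prove the cycle $(1)\Rightarrow(3)\Rightarrow(2)\Rightarrow(1)$. For $(1)\Rightarrow(3)$, since $\Kantorovich{\Lip}$ metrizes the weak* topology it is finite, so $D=\diam{\StateSpace(\A)}{\Kantorovich{\Lip}}<\infty$, whence $\|a\|_\A = \sup_\psi|\psi(a)-\varphi(a)|\leq D$ for every $a\in B_\varphi$; moreover the family $\{\hat a : a\in B_\varphi\}$ is equicontinuous on $(\StateSpace(\A),\text{weak*})$, because $|\hat a(\varphi)-\hat a(\psi)|\leq\Kantorovich{\Lip}(\varphi,\psi)$ and, the topologies agreeing, each $\Kantorovich{\Lip}$-ball is a weak*-neighborhood. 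Arzel\`a--Ascoli then renders $\{\hat a : a\in B_\varphi\}$ totally bounded in $C(\StateSpace(\A))$, and as $\|\hat a-\hat b\|_\infty = \|a-b\|_\A$, the set $B_\varphi$ is norm-totally bounded, for every $\varphi$. The implication $(3)\Rightarrow(2)$ is trivial. For $(2)\Rightarrow(1)$, total boundedness of $B_\varphi$ forces it to be bounded, so $D\leq 2\sup_{B_\varphi}\|a\|_\A<\infty$ and $\Kantorovich{\Lip}$ is finite; then a standard $\varepsilon/3$-argument, covering the totally bounded set $B_\varphi$ by finitely many norm-balls and invoking weak*-convergence only on the finitely many centers, shows that weak*-convergence implies $\Kantorovich{\Lip}$-convergence, so by the first standing fact the two topologies coincide.

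It remains to attach condition (4) and the lower-semicontinuity addendum. The easy direction $(2)\Rightarrow(4)$ writes any $a$ with $\Lip(a)\leq 1$ and $\|a\|_\A\leq 1$ as $(a-\varphi(a)\unit_\A)+\varphi(a)\unit_\A$, the first summand lying in the totally bounded set $B_\varphi$ and the scalar in the compact interval $[-1,1]\unit_\A$, so the set in (4) is totally bounded. The reverse $(4)\Rightarrow(2)$ is the main obstacle, since it requires that total boundedness of the norm-bounded Lip-ball already force $B_\varphi$ to be bounded. I would argue by contradiction: normalizing a hypothetical sequence in $B_\varphi$ with norms tending to infinity produces unit-norm elements $w_n$ with $\Lip(w_n)\to 0$, which therefore lie in the set of (4) and admit a norm-Cauchy subsequence converging to a unit-norm $w$ with $\varphi(w)=0$ and $\Lip(w)=0$, i.e. $w\in\R\unit_\A$, forcing $w=0$, a contradiction. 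The sole delicate point, "$\Lip(w)=0$", is exactly where lower semicontinuity of $\Lip$ enters, equivalently the completeness of $(\dom{\Lip},\|\cdot\|_\A+\Lip)$ established in Step 1 of Theorem \ref{equiv-thm}; for a general seminorm one first passes to its lower-semicontinuous regularization, which changes neither $\Kantorovich{\Lip}$ nor any of the four conditions, reducing to this case. The final assertion is then immediate, since when $\Lip$ is lower semicontinuous the sets in (2)--(4) are norm-closed, hence, being totally bounded in the complete space $\sa{\A}$, are compact.
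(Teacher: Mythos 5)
You should first note that the paper does not prove this theorem at all: it is quoted from Rieffel, so your argument has to stand on its own against the standard one. Most of it does. The cycle $(1)\Rightarrow(3)\Rightarrow(2)\Rightarrow(1)$ via Arzel\`a--Ascoli on $(\StateSpace(\A),\text{weak*})$, the translation device $a\mapsto a-\varphi(a)\unit_\A$, and the compact-to-Hausdorff homeomorphism trick is exactly Rieffel's route and is correct; so is $(2)\Rightarrow(4)$; and under lower semicontinuity your normalization argument for $(4)\Rightarrow(2)$ is also fine (to finish it you should make explicit that once $B_\varphi$ is bounded by some $R\geq 1$, the scaling $B_\varphi\subseteq R\,\{a : \Lip(a)\leq 1,\ \|a\|_\A\leq 1\}$ upgrades boundedness to total boundedness). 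The genuine gap is the reduction of the general case to the lsc case. Your regularization claim is half right: if $\overline{\Lip}$ denotes the gauge of the norm closure of $\{\Lip\leq 1\}$, then one can check (translation trick again) that $\Kantorovich{\Lip}$ and all four conditions are indeed unchanged. What is \emph{not} preserved is the standing hypothesis $\{a : \Lip(a)=0\}=\R\unit_\A$: the closure can strictly enlarge the null space, and your contradiction terminates precisely by invoking that hypothesis (``$\Lip(w)=0$, i.e.\ $w\in\R\unit_\A$, forcing $w=0$''). So the reduction lands you in a situation where the last step of your own argument is unavailable.

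Moreover this cannot be patched, because $(4)\Rightarrow(2)$ is genuinely false for a bare seminorm with null space $\R\unit_\A$. In $\A=C([0,1])$, let $w(t)=t$, $u_n(t)=\sin(2\pi n t)$, $w_n=w+4^{-n}u_n$, and $\varphi=\mathrm{ev}_0$; choose $(d_k)_{k\in\N}$ within $2^{-k}$ of a dense sequence of $\sa{\A}$ while keeping $\{\unit_\A, w, u_1, u_2,\ldots, d_1, d_2,\ldots\}$ linearly independent (possible, as finite-dimensional subspaces are nowhere dense). On the dense subspace $V=\operatorname{span}(\unit_\A,\{w_n\},\{d_k\})$, which does not contain $w$, set $\Lip\bigl(\alpha\unit_\A+\sum_n\beta_n w_n+\sum_k\gamma_k d_k\bigr)=\sum_n|\beta_n|/n+\sum_k 2^k(1+\|d_k\|_\A)|\gamma_k|$. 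The null space is exactly $\R\unit_\A$; yet $n w_n\in B_\varphi$ with $\|nw_n\|_\A\approx n$, so $(2)$ fails (and $\diam{\StateSpace(\A)}{\Kantorovich{\Lip}}=\infty$, so $(1)$ fails), while the set in $(4)$ \emph{is} totally bounded: modulo the compact absolutely convex hulls of the null sequences $(n4^{-n}u_n)_n$ and $\bigl(2^{-k}(1+\|d_k\|_\A)^{-1}d_k\bigr)_k$, each of its elements reduces to $\alpha\unit_\A+\sigma w$ with $\max(|\alpha|,|\alpha+\sigma|)$ bounded by the constraint $\|\cdot\|_\A\leq 1$, so the whole set sits inside a sum of four compact sets. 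Note that here $\overline{\Lip}(w)=0$ with $w\notin\R\unit_\A$, which is exactly the failure mode described above. The upshot: condition $(4)$ needs either lower semicontinuity or an explicit finite-diameter clause (this is how the $\|a\|_\A$-capped criterion appears in the Rieffel-school sources), so the theorem as transcribed in the paper is itself slightly too generous --- but since every application in the paper concerns lower semi-continuous Lip-norms, your argument, restricted to that case together with your correct closing observation that lsc makes the sets in $(2)$--$(4)$ closed and hence compact, covers everything actually used.
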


\begin{lemma}\label{finite-HausLip-lemma}
Let $\A$ be a unital C*-algebra. For any two lower semi-continuous Lip-norms $\Lip_1$ and $\Lip_2$ on $\A$, and for any state $\varphi \in \StateSpace(\A)$, we have:
\begin{multline*}
\HausLip{\A}(\Lip_1,\Lip_2) \leq \Haus{\|\cdot\|_\A}(\{a\in\sa{\A}:\varphi(a) = 0, \Lip_1(a) \leq 1\}, \\ \{a\in\sa{\A}:\varphi(a)=0, \Lip_2(a)\leq 1\}) \text{.}
\end{multline*}
In particular, $\HausLip{\A}(\Lip_1,\Lip_2)$ is finite.
\end{lemma}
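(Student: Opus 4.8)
The plan is to reduce the Hausdorff distance between the two full Lip-norm unit balls to the Hausdorff distance between the anchored balls on the right-hand side, using the single observation that translation by a scalar multiple of $\unit_\A$ is a norm isometry which preserves both the Lip-norm balls and the difference of two elements. Throughout, write $L_i = \{a\in\sa{\A} : \Lip_i(a)\leq 1\}$ and $L_i^\varphi = \{a\in\sa{\A} : \varphi(a) = 0,\ \Lip_i(a)\leq 1\}$ for $i\in\{1,2\}$, reading $\Lip_i$ as valued $+\infty$ off $\dom{\Lip_i}$, so that $L_i^\varphi$ is exactly the totally bounded set of Theorem~\ref{Rieffel-az-thm}. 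Since $\Lip_i$ is lower semi-continuous, each $L_i$ is a closed sublevel set, so that $\Haus{\|\cdot\|_\A}(L_1,L_2)$ is meaningful.

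First I would record the structural fact $L_i = L_i^\varphi + \R\unit_\A$. Indeed, $\varphi$ is a state, so $\varphi(\unit_\A)=1$; and $a$ being self-adjoint forces $\varphi(a)\in\R$. For any $a\in L_i$, set $t=\varphi(a)$ and $a' = a - t\unit_\A$: then $\varphi(a')=0$ and, because $\Lip_i$ vanishes on $\R\unit_\A$, $\Lip_i(a')=\Lip_i(a)\leq 1$, so $a'\in L_i^\varphi$ and $a = a' + t\unit_\A$. The reverse inclusion $L_i^\varphi\subseteq L_i$ is immediate. Thus every element of $L_i$ is an element of $L_i^\varphi$ shifted by a scalar multiple of the unit.

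Next I would prove the inequality. Let $H = \Haus{\|\cdot\|_\A}(L_1^\varphi, L_2^\varphi)$. Given $a\in L_1$, decompose $a = a' + t\unit_\A$ with $t=\varphi(a)$ and $a'\in L_1^\varphi$ as above. By definition of $H$ we have $\dist(a', L_2^\varphi)\leq H$, so for each $\varepsilon>0$ there is $b'\in L_2^\varphi$ with $\|a'-b'\|_\A\leq H+\varepsilon$. Putting $b = b' + t\unit_\A\in L_2$, the shift $t\unit_\A$ cancels in the difference, giving $\|a-b\|_\A = \|a'-b'\|_\A\leq H+\varepsilon$; letting $\varepsilon\to 0$ yields $\dist(a,L_2)\leq H$. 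The symmetric argument bounds $\dist(b,L_1)$ for $b\in L_2$, and taking suprema gives $\HausLip{\A}(\Lip_1,\Lip_2) = \Haus{\|\cdot\|_\A}(L_1,L_2)\leq H$, which is the claimed inequality.

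Finally, for finiteness I would invoke Theorem~\ref{Rieffel-az-thm}: since each $\Lip_i$ is a lower semi-continuous Lip-norm, the anchored ball $L_i^\varphi$ is norm-compact, hence bounded, so $H$ is finite; the displayed inequality then forces $\HausLip{\A}(\Lip_1,\Lip_2)$ to be finite as well. I do not expect a genuine obstacle here: the entire content is the decomposition $L_i = L_i^\varphi + \R\unit_\A$ together with the fact that scalar translations are isometries respecting the balls, and the only point needing care is the bookkeeping that the set $\{a:\Lip_i(a)\leq 1\}$ implicitly restricts to $\dom{\Lip_i}$ so that $L_i^\varphi$ matches the compact set supplied by Theorem~\ref{Rieffel-az-thm}.
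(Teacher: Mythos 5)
Your proof is correct and takes essentially the same route as the paper: both reduce the full unit balls to the anchored balls via the decomposition $a = (a-\varphi(a)\unit_\A) + \varphi(a)\unit_\A$, using that $\Lip_i$ vanishes on $\R\unit_\A$ so the scalar shift preserves the Lip-balls and cancels in norm differences, with finiteness supplied by the compactness of the anchored balls from Theorem~(\ref{Rieffel-az-thm}). The only cosmetic difference is your $\varepsilon$-approximation of the Hausdorff distance where the paper selects $b$ directly at distance at most $d$, which is legitimate since the anchored ball is compact; both variants are sound.
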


\begin{proof}
Let $\varphi \in \StateSpace(\A)$. By Theorem (\ref{Rieffel-az-thm}), $\{a\in\sa{\A} : \varphi(a) = 0, \Lip_j(a)\leq 1\}$ are compact for $j=1,2$. Thus the Hausdorff distance (for the norm of $\A$) between these two sets is finite; let us denote it by $d$.

Let $a\in\sa{\A}$ with $\Lip_1(a)\leq 1$. There exists $b \in \sa{\A}$ with $\Lip_2(b) \leq 1$ and $\varphi(b) = 0$ such that $\|a-\varphi(a)\unit_\A - b\|_\A \leq d$. Thus $\|a - (b + \varphi(a)\unit_\A)\|_\A \leq d$, and we note that $\Lip_2(b + \varphi(a)\unit_\A) = \Lip_2(b) \leq 1$. As the argument is symmetric is $\Lip_1$ and $\Lip_2$, we have shown our lemma.
\end{proof}

Our goal is to establish a new sufficient condition for certain classes of quantum compact metric spaces to be totally bounded for the \emph{quantum propinquity}. We refer to \cite{Latremoliere13,Latremoliere13b,Latremoliere14,Latremoliere15,Latremoliere15b} for the definition of the quantum propinquity and some of its properties. We briefly recall from \cite{Latremoliere13} the notion of a bridge, as it will be used in our next proof, and provide a characterization of the quantum propinquity.

Let $\A$ and $\B$ be two unital C*-algebras. A \emph{bridge} $(\D,\omega,\pi_\A,\pi_\B)$ from $\A$ to $\B$ is a unital C*-algebra $\D$ and two unital *-monomorphisms $\pi_\A : \A \hookrightarrow\D$ and $\pi_\B:\B \hookrightarrow\D$, as well as an element $\omega\in\D$ such that for at least one state $\varphi$ of $\D$, we have $\varphi(\omega d) = \varphi(d \omega) = \varphi(d)$ for all $d\in\D$. The set of all such states of $\D$, denoted by $\StateSpace_1(\D|\omega)$, is the $1$-level set of $\D$.

Now if $(\A,\Lip_\A)$ and $(\B,\Lip_\B)$ are two quantum compact metric spaces, then we can associate a number, called the length, to a bridge $\gamma = (\D,\omega,\pi_\A,\pi_\B)$ from $\A$ to $\B$. We first define the \emph{reach} of $\gamma$ as the Hausdorff distance between $\{ \pi_\A(a)\omega : a\in\sa{\A}, \Lip_\A(a)\leq 1\}$ and $\{ \omega\pi_\A(b) : b\in\sa{\B}, \Lip_\B(b)\leq 1\}$ for $\Haus{\|\cdot\|_\D}$. We then define the \emph{height} of $\gamma$ as the maximum of the Hausdorff distance, for $\Haus{\Kantorovich{\Lip_\A}}$, between $\StateSpace(\A)$ and $\{\varphi\circ\pi_\A : \varphi\in\StateSpace_1(\D|\omega)\}$, and the Hausdorff distance for $\Haus{\Kantorovich{\Lip_\B}}$, between $\StateSpace(\B)$ and $\{\varphi\circ\pi_\B : \varphi\in\StateSpace_1(\D|\omega)\}$.

The length $\bridgelength{\gamma}{\Lip_\A,\Lip_\B}$ of the bridge $\gamma$ is the maximum of its reach and its height. The quantum propinquity is constructed from bridges, although it requires a few technical steps. In particular, the quantum propinquity is defined on classes of {\Qqcms{F}} for an \emph{admissible function $F$}, i.e. a function $F : [0,\infty)^4\rightarrow[0,\infty)$ which is increasing for the product order on $[0,\infty)^4$ and such that $F(x,y,l_x,l_y) \geq xl_y + y l_x$ for all $x,y,l_x,l_y \geq 0$. Given such a function, a {\Qqcms{F}} $(\A,\Lip)$ is a quantum compact metric space such that for all $a,b \in \sa{\A}$ we have:
\begin{equation*}
\max\left\{ \Lip\left(\frac{a b + b a}{2}\right), \Lip\left(\frac{a b - b a}{2i} \right)  \right\} \leq F(\|a\|_\A,\|b\|_\A,\Lip(a),\Lip(b)) \text{.}
\end{equation*}

The following result characterizes the quantum propinquity.

\begin{theorem-definition}[\cite{Latremoliere13}]\label{def-thm}
Let $\mathcal{L}$ be the class of all {\Qqcms{F}s} for some admissible function $F$. There exists a class function $\qpropinquity{F}$ from $\mathcal{L}\times\mathcal{L}$ to $[0,\infty) \subseteq \R$ such that:
\begin{enumerate}
\item for any $(\A,\Lip_\A), (\B,\Lip_\B) \in \mathcal{L}$ we have:
\begin{equation*}
0\leq \qpropinquity{F}((\A,\Lip_\A),(\B,\Lip_\B)) \leq \max\left\{\diam{\StateSpace(\A)}{\Kantorovich{\Lip_\A}}, \diam{\StateSpace(\B)}{\Kantorovich{\Lip_\B}}\right\}\text{,}
\end{equation*}
\item for any $(\A,\Lip_\A), (\B,\Lip_\B) \in \mathcal{L}$ we have:
\begin{equation*}
\qpropinquity{F}((\A,\Lip_\A),(\B,\Lip_\B)) = \qpropinquity{F}((\B,\Lip_\B),(\A,\Lip_\A))\text{,}
\end{equation*}
\item for any $(\A,\Lip_\A), (\B,\Lip_\B), (\alg{C},\Lip_{\alg{C}}) \in \mathcal{L}$ we have:
\begin{equation*}
\qpropinquity{F}((\A,\Lip_\A),(\alg{C},\Lip_{\alg{C}})) \leq \qpropinquity{F}((\A,\Lip_\A),(\B,\Lip_\B)) + \qpropinquity{F}((\B,\Lip_\B),(\alg{C},\Lip_{\alg{C}}))\text{,}
\end{equation*}
\item for all $(\A,\Lip_\A), (\B,\Lip_\B) \in \mathcal{L}$ and for any bridge $\gamma$ from $\A$ to $\B$, we have:
\begin{equation*}
\qpropinquity{F}((\A,\Lip_\A), (\B,\Lip_\B)) \leq \bridgelength{\gamma}{\Lip_\A,\Lip_\B}\text{,}
\end{equation*}
\item for any $(\A,\Lip_\A), (\B,\Lip_\B) \in \mathcal{L}$, we have $\qpropinquity{F}((\A,\Lip_\A),(\B,\Lip_\B)) = 0$ if and only if $(\A,\Lip_\A)$ and $(\B,\Lip_\B)$ are isometrically isomorphic, i.e. if and only if there exists a *-isomorphism $\pi : \A \rightarrow\B$ with $\Lip_\B\circ\pi = \Lip_\A$, or equivalently there exists a *-isomorphism $\pi : \A \rightarrow\B$ whose dual map $\pi^\ast$ is an isometry from $(\StateSpace(\B),\Kantorovich{\Lip_\B})$ into $(\StateSpace(\A),\Kantorovich{\Lip_\A})$,

\item if $\Xi$ is a class function from $\mathcal{L}\times \mathcal{L}$ to $[0,\infty)$ which satisfies Properties (2), (3) and (4) above, then $\Xi((\A,\Lip_\A), (\B,\Lip_\B)) \leq \qpropinquity{F}((\A,\Lip_\A),(\B,\Lip_\B))$ for all $(\A,\Lip_\A)$ and $(\B,\Lip_\B)$ in $\mathcal{L}$,
\item the topology induced by $\qpropinquity{F}$ on the class of classical metric spaces agrees with the topology induced by the Gromov-Hausdorff distance.
\end{enumerate}
\end{theorem-definition}

We connect our new distance between Lip-norms on a fixed C*-algebra and the propinquity easily.

\begin{proposition}\label{HausLip-qprop-prop}
Let $\A$ be a unital C*-algebra. If $\Lip_1$ and $\Lip_2$ are two $F$-quasi-Leibniz lower semi-continuous Lip-norms on $\A$ for some admissible function $F$, then:
\begin{equation*}
\qpropinquity{F}((\A,\Lip_1),(\A,\Lip_2)) \leq \HausLip{\A}(\Lip_1,\Lip_2)\text{.}
\end{equation*}
\end{proposition}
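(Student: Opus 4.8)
The plan is to exhibit a single, explicit bridge from $(\A,\Lip_1)$ to $(\A,\Lip_2)$ whose length is exactly $\HausLip{\A}(\Lip_1,\Lip_2)$, and then to invoke Property (4) of Theorem-Definition (\ref{def-thm}), which bounds the propinquity by the length of \emph{any} bridge. Since both $\Lip_1$ and $\Lip_2$ are $F$-quasi-Leibniz by hypothesis, the pairs $(\A,\Lip_1)$ and $(\A,\Lip_2)$ both lie in the class $\mathcal{L}$ on which $\qpropinquity{F}$ is defined, so this strategy is available and the right-hand side makes sense.

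The bridge I would use is the trivial one carried by the algebra itself: set $\gamma = (\A,\unit_\A,\mathrm{id}_\A,\mathrm{id}_\A)$, so that $\D = \A$, the pivot element is $\omega = \unit_\A$, and both unital *-monomorphisms are the identity. First I would verify this is a genuine bridge: since $\omega = \unit_\A$ we have $\omega d = d\omega = d$ for all $d\in\A$, whence $\varphi(\omega d)=\varphi(d\omega)=\varphi(d)$ holds for \emph{every} state $\varphi$, and in particular $\StateSpace_1(\A|\unit_\A) = \StateSpace(\A)$.

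With that identification in hand, the height of $\gamma$ vanishes: for each of $\Lip_1$ and $\Lip_2$ the set $\{\varphi\circ\mathrm{id}_\A : \varphi\in\StateSpace_1(\A|\unit_\A)\}$ is literally all of $\StateSpace(\A)$, so both relevant Hausdorff distances are $0$. For the reach, because $\pi_\A(a)\omega = a$ and $\omega\,\pi_\B(b) = b$, the two target sets are precisely $\{a\in\sa{\A} : \Lip_1(a)\leq 1\}$ and $\{b\in\sa{\A} : \Lip_2(b)\leq 1\}$, whose Hausdorff distance for $\Haus{\|\cdot\|_\A}$ is exactly $\HausLip{\A}(\Lip_1,\Lip_2)$ by Definition (\ref{HausLip-def}). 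Therefore $\bridgelength{\gamma}{\Lip_1,\Lip_2} = \max\{\HausLip{\A}(\Lip_1,\Lip_2),\,0\} = \HausLip{\A}(\Lip_1,\Lip_2)$, and Property (4) of Theorem-Definition (\ref{def-thm}) yields the claimed inequality.

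There is no serious analytic obstacle here; the entire content is the recognition that the identity bridge with unit pivot has vanishing height and a reach that reproduces $\HausLip{\A}$ on the nose. The only points needing care are bookkeeping: lower semi-continuity of $\Lip_1$ and $\Lip_2$ makes the two Lip-balls norm-closed, so the Hausdorff distance is genuinely taken between closed sets as the definition of $\HausLip{\A}$ requires, and that distance is finite by Lemma (\ref{finite-HausLip-lemma}), so $\bridgelength{\gamma}{\Lip_1,\Lip_2}$ is a well-defined nonnegative real number.
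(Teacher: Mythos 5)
Your proposal is correct and is exactly the paper's argument: the paper's entire proof is the single sentence ``We simply use the bridge $(\A,\unit_\A,\mathrm{id},\mathrm{id})$,'' and your verification that this bridge has vanishing height and reach equal to $\HausLip{\A}(\Lip_1,\Lip_2)$, combined with Property (4) of Theorem-Definition (\ref{def-thm}), is precisely the computation the paper leaves implicit.
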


\begin{proof}
We simply use the bridge $(\A,\unit_\A,\mathrm{id},\mathrm{id})$ where $\mathrm{id}$ is the identity on $\A$.
\end{proof}

The purpose of this section is to establish the fact that uniformly equivalent Lip-norms, as defined in the hypothesis of the next proposition, provide totally bounded classes of quantum compact metric spaces for the metric $\HausLip{}$ and thus for the quantum propinquity, whenever applicable.

\begin{proposition}\label{totally-bounded-prop}
Let $(\A,\Lip)$ be a quantum compact metric space where $\Lip$ is lower semi-continuous. If $\Xi$ is a set of lower semi-continuous Lip-norms on $\A$ for which there exists $C > 0$ such that, for all $\mathsf{Lip} \in \Xi$, we have $\Lip \leq C \mathsf{Lip}$, then $\Xi$ is totally bounded for $\HausLip{\A}$ (and therefore, when applicable, for the quantum propinquity as well).
\end{proposition}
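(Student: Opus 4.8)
The plan is to reduce the total boundedness of $\Xi$ for $\HausLip{\A}$ to a purely set-theoretic statement about unit balls, and then to exploit the fact that the uniform bound $\Lip \leq C\mathsf{Lip}$ confines all these balls to a single fixed norm-compact set. First I would fix once and for all a state $\varphi \in \StateSpace(\A)$, and for each lower semi-continuous Lip-norm $\mathsf{Lip}$ on $\A$ introduce the grounded unit ball $B_{\mathsf{Lip}} = \{a\in\sa{\A} : \varphi(a) = 0,\ \mathsf{Lip}(a)\leq 1\}$. By Lemma (\ref{finite-HausLip-lemma}), for any two members $\mathsf{Lip},\mathsf{Lip}'$ of $\Xi$ we have $\HausLip{\A}(\mathsf{Lip},\mathsf{Lip}') \leq \Haus{\|\cdot\|_\A}(B_{\mathsf{Lip}},B_{\mathsf{Lip}'})$. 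Consequently it suffices to prove that the family $\{B_{\mathsf{Lip}} : \mathsf{Lip}\in\Xi\}$ is totally bounded for $\Haus{\|\cdot\|_\A}$ on the closed subsets of $\sa{\A}$, since a finite $\varepsilon$-net for these balls transports directly to a finite $\varepsilon$-net for $\Xi$.

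The key observation is then that the uniform equivalence hypothesis forces all the grounded balls into one compact set. Indeed, $\mathsf{Lip}(a)\leq 1$ implies $\Lip(a)\leq C$, so every $B_{\mathsf{Lip}}$ is contained in the fixed set $K = \{a\in\sa{\A} : \varphi(a)=0,\ \Lip(a)\leq C\}$. Because $\Lip$ is a lower semi-continuous Lip-norm, Theorem (\ref{Rieffel-az-thm}), together with homogeneity (so that $K = C\cdot\{a\in\sa{\A}:\varphi(a)=0,\ \Lip(a)\leq 1\}$), shows that $K$ is norm-compact. Moreover each $B_{\mathsf{Lip}}$ is closed, as $\mathsf{Lip}$ is lower semi-continuous and $\varphi$ is norm-continuous, and is nonempty since it contains $0$; hence each $B_{\mathsf{Lip}}$ is a nonempty compact subset of $K$.

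To conclude, I would invoke the classical fact that the hyperspace $\compacts{K}$ of nonempty compact subsets of a compact metric space $K$ is itself compact for the Hausdorff distance (the Blaschke selection principle), and therefore totally bounded. Since $\{B_{\mathsf{Lip}} : \mathsf{Lip}\in\Xi\}$ is a subfamily of $\compacts{K}$, it is totally bounded for $\Haus{\|\cdot\|_\A}$, which by the reduction of the first paragraph gives total boundedness of $\Xi$ for $\HausLip{\A}$. Finally, when the members of $\Xi$ are $F$-quasi-Leibniz for a common admissible function $F$ (the ``when applicable'' clause), Proposition (\ref{HausLip-qprop-prop}) bounds $\qpropinquity{F}$ by $\HausLip{\A}$, so total boundedness passes to the quantum propinquity.

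I do not expect a deep obstacle here: the substance of the argument is the single recognition that uniform equivalence is exactly the hypothesis placing every grounded unit ball inside one compact set. The only point demanding genuine care is the passage from the honest unit balls $\{a : \mathsf{Lip}(a)\leq 1\}$ appearing in Definition (\ref{HausLip-def}), which are unbounded and hence noncompact, to the grounded balls $B_{\mathsf{Lip}}$; this is precisely what Lemma (\ref{finite-HausLip-lemma}) supplies, and without that reduction the hyperspace-compactness step would not apply.
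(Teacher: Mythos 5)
Your proposal is correct and takes essentially the same route as the paper's own proof: fix a state, observe that the uniform bound $\Lip \leq C\,\mathsf{Lip}$ confines every grounded unit ball to the single set $\{a\in\sa{\A} : \varphi(a)=0,\ \Lip(a)\leq C\}$, which is norm-compact by Theorem (\ref{Rieffel-az-thm}), apply Blaschke's theorem to the hyperspace, and transfer back to $\HausLip{\A}$ via Lemma (\ref{finite-HausLip-lemma}) and to the propinquity via Proposition (\ref{HausLip-qprop-prop}). Your explicit checks that each grounded ball is closed and nonempty are minor refinements the paper leaves implicit.
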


\begin{proof}
We fix $\mu \in \StateSpace(\A)$. By assumption, for all $\mathsf{Lip} \in \Xi$, we have:
\begin{equation*}
\left\{ a\in\sa{\A} : \mathsf{Lip}(a) \leq 1, \mu(a) = 0 \right\} \subseteq \left\{ a\in\sa{\A} : \Lip(a)\leq C, \mu(a) = 0 \right\}\text{.}
\end{equation*}
Now, since $\Lip$ is a lower semi-continuous Lip-norm, the set:
\begin{equation*}
\alg{L} = \left\{ a\in\sa{\A} : \Lip(a)\leq C, \mu(a) = 0 \right\}
\end{equation*}
is compact for $\|\cdot\|_\A$ by Theorem (\ref{Rieffel-az-thm}). Thus by Blaschke's Theorem, the hyperspace of the closed subsets of $\alg{L}$ is compact for the Hausdorff distance $\Haus{\|\cdot\|_\A}$. We conclude our proof using Lemma (\ref{finite-HausLip-lemma}) (and Proposition (\ref{HausLip-qprop-prop}) for the quantum propinquity conclusion).
\end{proof}

The main application of Proposition (\ref{totally-bounded-prop}) in this paper concerns certain perturbations we have established in \cite{Latremoliere15,Latremoliere15b}. These perturbations were constructed using \cite[Lemma 3.79]{Latremoliere15b}, which encapsulates a recurrent computation when estimating the quantum propinquity between two {\gQqcms s}. When using this lemma, one will typically obtain uniformly equivalent families of Lip-norms, thus the following examples will be typical.

\begin{notation}
If $\Hilbert$ is some Hilbert space and $T$ is a bounded linear operator on $\Hilbert$, then we denote the norm of $T$ by $\opnorm{T}{\Hilbert}$.
\end{notation}

\begin{proposition}[{\cite[Proposition 3.82]{Latremoliere15b}}]
  Let $\A$ be a unital C*-algebra, $\pi$ a unital faithful *-representation of $\A$ on some Hilbert space $\Hilbert$, and $D$ a self-adjoint, possibly unbounded operator on $\Hilbert$ such that setting:
  \begin{equation*}
    \dom{\Lip} = \left\{ a \in \sa{\A} : \pi(a)\dom{D}\subseteq\dom{D} \text{ and }[D,\pi(a)] \text{ is bounded } \right\}
  \end{equation*}
  and
  \begin{equation*}
    \Lip : a\in\A \longmapsto \opnorm{[D,\pi(a)]}{\Hilbert}
  \end{equation*}
  the pair $(\A,\Lip)$ is a {\Lqcms}. Let $r = \diam{\A}{\Lip}$ and assume $r > 0$.
  
  Let $\B$ be the C*-algebra of all bounded linear operators on $\Hilbert$. 
  
  For any $\omega \in \sa{\B}$ with $\norm{\omega}{\B} < \frac{1}{2r}$, we define:
  \begin{equation*}
    D_\omega = D+\omega\text{ and }\Lip_\omega : a\in\dom{\Lip} \mapsto \opnorm{[D_\omega, \pi(a)]}{\Hilbert}\text{.}
  \end{equation*}
  
  The pair $(\A,\Lip_\omega)$ is a {\Lqcms} for all bounded self-adjoint $\omega$ on $\Hilbert$ \emph{such that $\norm{\omega}{\B}<\frac{1}{2r}$}, and, moreover:
  \begin{equation*}
    \omega \in \{\xi\in\sa{\B}:\norm{\xi}{\B} \} \longmapsto (\A,\Lip_\omega)
  \end{equation*}
  is continuous for the quantum Gromov-Hausdorff propinquity $\qpropinquity{}$.
\end{proposition}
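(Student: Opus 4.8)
The plan is to compare $\Lip_\omega$ with the reference Lip-norm $\Lip$ through the elementary identity $[D_\omega,\pi(a)] = [D,\pi(a)] + [\omega,\pi(a)]$, and then to feed the resulting estimates into Theorem~(\ref{Rieffel-az-thm}) for the \Lqcms{} claim and into the $\HausLip{\A}$-machinery of this section for the continuity claim. The first thing I would record is the \emph{two-sided comparison}. Since $\pi$ is faithful, hence isometric, $\opnorm{\pi(a)}{\Hilbert} = \|a\|_\A$, so $\opnorm{[\omega,\pi(a)]}{\Hilbert} \leq 2\opnorm{\omega}{\Hilbert}\|a\|_\A$ for every $a \in \sa{\A}$, and the triangle inequality gives
\[
\bigl|\Lip_\omega(a) - \Lip(a)\bigr| \leq 2\opnorm{\omega}{\Hilbert}\,\|a\|_\A .
\]
This immediately yields $\dom{\Lip_\omega} = \dom{\Lip}$ and the lower semi-continuity of $\Lip_\omega$; the same computation with $\omega-\omega'$ in place of $\omega$ gives $|\Lip_\omega(a)-\Lip_{\omega'}(a)| \leq 2\opnorm{\omega-\omega'}{\Hilbert}\|a\|_\A$, and this single linear estimate drives everything below. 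The quasi-Leibniz property is then free: because $[D_\omega,\,\cdot\,]$ is a derivation and $\pi$ a $*$-morphism, expanding $[D_\omega,\pi(a)\pi(b)]$ and using $\opnorm{\pi(a)}{\Hilbert}=\|a\|_\A$ shows $\Lip_\omega$ satisfies the Leibniz inequality with the fixed admissible function $F(x,y,l_x,l_y)=xl_y+yl_x$, the \emph{same} $F$ for every $\omega$, so all the $(\A,\Lip_\omega)$ lie in a single class $\mathcal{L}$.

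Next I would show each $(\A,\Lip_\omega)$ is a \Lqcms{} via Theorem~(\ref{Rieffel-az-thm}). Total boundedness is clean and uniform in $\omega$: on $\{a : \Lip_\omega(a)\leq 1, \|a\|_\A\leq 1\}$ the comparison gives $\Lip(a) \leq 1 + 2\opnorm{\omega}{\Hilbert}=:M$, so this set lies in $\{a : \Lip(a)\leq M, \|a\|_\A\leq 1\} \subseteq M\cdot\{a:\Lip(a)\leq 1,\|a\|_\A\leq 1\}$, which is totally bounded by Theorem~(\ref{Rieffel-az-thm}) applied to $\Lip$ (here $M\geq 1$). The remaining hypothesis of that theorem, that $\Lip_\omega$ vanish exactly on $\R\unit_\A$, is the \textbf{main obstacle}. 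The additive term $2\opnorm{\omega}{\Hilbert}\|a\|_\A$ is favourable only for small $\omega$: combining $\Lip(a)\leq 2\opnorm{\omega}{\Hilbert}\|a\|_\A$ (valid when $\Lip_\omega(a)=0$) with the standard bound $\|a\|_\A \leq \diam{\StateSpace(\A)}{\Kantorovich{\Lip}}\Lip(a)$ for grounded $a$ forces $a\in\R\unit_\A$ only once $2\opnorm{\omega}{\Hilbert}\diam{\StateSpace(\A)}{\Kantorovich{\Lip}} < 1$. For unrestricted $\omega$ this quantitative argument collapses and one must argue structurally: if $\Lip_\omega(a)=0$ then $\pi(a)$ commutes with the self-adjoint $D_\omega$, hence so does $f(\pi(a))$ for every $f\in C(\sigma(a))$, whence $\Lip(f(a)) = \opnorm{[\omega,f(\pi(a))]}{\Hilbert}\leq 2\opnorm{\omega}{\Hilbert}\|f\|_\infty$; excluding a nonscalar $a$ from this uniform control of the functional calculus is precisely the delicate step, and is where I would invoke the mechanism of \cite[Lemma~3.79]{Latremoliere15b} underlying this family of perturbations. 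Granting the null-space identity, compactness of the grounded balls $\{a:\varphi(a)=0,\Lip_\omega(a)\leq 1\}$ and finiteness of $\diam{\StateSpace(\A)}{\Kantorovich{\Lip_\omega}}$ then follow from Theorem~(\ref{Rieffel-az-thm}).

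Finally I would deduce continuity of $\omega\mapsto(\A,\Lip_\omega)$ from the linear comparison. Fix $\omega_0$, write $\varepsilon = 2\opnorm{\omega-\omega_0}{\Hilbert}$ and $D_0 = \diam{\StateSpace(\A)}{\Kantorovich{\Lip_{\omega_0}}}$; by Proposition~(\ref{HausLip-qprop-prop}) and Lemma~(\ref{finite-HausLip-lemma}) it suffices, for a fixed state $\varphi$, to bound the norm-Hausdorff distance between the grounded balls $G_\eta = \{a:\varphi(a)=0,\Lip_\eta(a)\leq 1\}$ for $\eta=\omega,\omega_0$. For $a\in G_{\omega_0}$ the comparison gives $\Lip_\omega(a)\leq 1+\varepsilon\|a\|_\A \leq 1+\varepsilon R_0$ with $R_0 = \sup_{G_{\omega_0}}\|\cdot\|_\A<\infty$; rescaling by $(1+\varepsilon R_0)^{-1}$ lands $a$ in $G_\omega$ at distance $\leq \varepsilon R_0^2$. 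The reverse inclusion is symmetric once $\sup_{a\in G_\omega}\|a\|_\A$ is bounded uniformly near $\omega_0$, which the comparison plus $\|a\|_\A\leq D_0\,\Lip_{\omega_0}(a)$ supplies (they give $\|a\|_\A\leq D_0/(1-D_0\varepsilon)\leq 2D_0$ for $\varepsilon\leq 1/(2D_0)$). Hence $\HausLip{\A}(\Lip_\omega,\Lip_{\omega_0}) = O(\opnorm{\omega-\omega_0}{\Hilbert})$, so $\qpropinquity{}((\A,\Lip_\omega),(\A,\Lip_{\omega_0}))\to 0$, which is the asserted continuity.
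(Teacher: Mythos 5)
The step you yourself flag as the main obstacle is a genuine gap, and it cannot be closed in the stated generality: the null-space identity for $\Lip_\omega$ actually \emph{fails} for large $\omega$. Take $\A = M_2(\C)$, $\Hilbert = \C^2\otimes\C^2$, $\pi(a) = a\otimes \unit$, and $D = \sum_{i=1}^4 \lambda_i P_i$ with distinct $\lambda_i$, where the $P_i$ are the rank-one projections onto the Bell basis. An element $\pi(a)$ commutes with $D$ if and only if each Bell vector is an eigenvector of $a\otimes\unit$; since the eigenvectors of $a\otimes\unit$ for nonscalar self-adjoint $a$ are product vectors and no Bell vector is a product vector, $\Lip$ vanishes only on $\R\unit_\A$, and $(\A,\Lip)$ is a {\Lqcms} (total boundedness being automatic in finite dimension). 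Yet for the bounded self-adjoint $\omega = -D$ we get $D_\omega = 0$ and $\Lip_\omega \equiv 0$, which is not a Lip-norm. Consequently your structural attempt through the functional calculus of elements commuting with $D_\omega$ cannot succeed, and deferring to \cite[Lemma 3.79]{Latremoliere15b} does not help either: the input to that comparison lemma is a two-sided bound of the form $(1-\varepsilon)\Lip \leq \Lip_\omega \leq (1+\varepsilon)\Lip$, and the lower bound is exactly what fails here. What your additive estimate honestly yields is the quantitative statement you set aside as insufficient: writing $D_0 = \diam{\StateSpace(\A)}{\Kantorovich{\Lip_{\omega_0}}}$, the seminorm $\Lip_\omega$ is a Lip-norm whenever $2\opnorm{\omega-\omega_0}{\Hilbert}D_0 < 1$ for some $\omega_0$ at which $\Lip_{\omega_0}$ is one; bootstrapping along $t\mapsto t\omega$ stalls because the diameter bound $\diam{\StateSpace(\A)}{\Kantorovich{\Lip_{t\omega}}} \leq D_0\left(1 - 2t\opnorm{\omega}{\Hilbert}D_0\right)^{-1}$ blows up at finite $t$, consistently with the counterexample. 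So the {\Lqcms} claim, as quoted, needs this smallness caveat, and what your argument actually establishes (correctly) is continuity at each $\omega_0$ where $\Lip_{\omega_0}$ is a Lip-norm.

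For context on the comparison: this paper does not reprove the proposition --- it quotes \cite[Proposition 3.82]{Latremoliere15b} and records only, as the key to that proof, a uniform equivalence of the form $\Lip \leq C(\omega)\Lip_\omega$. Your derivation of such an equivalence, namely $|\Lip_\omega(a) - \Lip(a)| \leq 2\opnorm{\omega}{\Hilbert}\|a\|_\A$ translated by a scalar and combined with $\|a + t\unit_\A\|_\A \leq \diam{\StateSpace(\A)}{\Kantorovich{\Lip}}\Lip(a)$, is exactly that mechanism, and your continuity route through Lemma~(\ref{finite-HausLip-lemma}) and Proposition~(\ref{HausLip-qprop-prop}) is the unit-pivot bridge, i.e.\ the same engine as the comparison lemma of the reference; the Hausdorff computation on grounded balls, with $R_0 \leq D_0$, is sound. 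One further slip worth fixing: lower semicontinuity of $\Lip_\omega$ does \emph{not} follow from being sandwiched between $\Lip \pm 2\opnorm{\omega}{\Hilbert}\|\cdot\|_\A$, as sandwiching does not preserve lower semicontinuity; if you want it (the proposition does not assert it), prove it directly, for instance as the supremum of the norm-continuous maps $a \mapsto \left|\inner{\xi}{\left(D_\omega\pi(a) - \pi(a)D_\omega\right)\eta}\right|$ over unit vectors $\xi,\eta$ in the domain of $D$.
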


\begin{proof}
By \cite[Proposition 3.7]{Rieffel00}, the seminorm $\Lip$ is lower semi-continuous.
  
For all $a\in\dom{\Lip}$ and for all $t\in\R$, we have
\begin{align*}
  \left|\Lip(a)-\Lip_\omega(a)\right|
  & \leq \norm{[-\omega,a]}{\B} = \norm{[\omega, a-t\unit_\B]}{\B}  \\
  & \leq 2 \norm{\omega}{\B} \norm{a-t\unit_\A}{\A}\text.
\end{align*}
Since $\Lip$ is a Lip-norm, there exists $t\in\R$ such that $\norm{a-t\unit_\A}{\A} \leq r \Lip(a)$. Thus we conclude
\begin{equation*}
  \forall a \in \dom{\Lip} \quad \left|\Lip(a)-\Lip_\omega(a)\right| \leq 2 r \norm{\omega}{\B} \Lip(a) \text.
\end{equation*}
Thus
\begin{equation*}
  \forall a \in \dom{\Lip} \quad (1-2 r \norm{\omega}{\B}) \Lip(a) \leq \Lip_\omega(a) \leq (1 + 2 r \norm{\omega}{\B}) \Lip(a) \text.
\end{equation*}

Since $\norm{\omega}{\B}<\frac{1}{2r}$, we conclude from \cite[Lemma 1.10]{Rieffel98a} that, indeed, $(\A,\Lip_\omega)$ is a {\qcms}. Thus we see that if $\Omega = \{ \omega \in \sa{\B} : \|\omega\|_\B \leq C\}$ for some $C \in (0,\frac{1}{2r})$, then the set of quantum metrics:
\begin{equation*}
\{\Lip_\omega : \omega \in \Omega \}
\end{equation*}
is totally bounded by Proposition (\ref{totally-bounded-prop}) for both $\HausLip{\A}$ and $\qpropinquity{}$. Yet $\Omega$ is not totally bounded unless $\B$ is finite dimensional.
\end{proof}

A deeper example of a non trivial class of totally bounded quantum metrics obtained from perturbations and \cite[Lemma 3.79]{Latremoliere15} is given by curved quantum tori, where, once more, the space of parameters is not totally bounded. These examples come from mathematical physics and were treated in \cite{Latremoliere15c} from the metric perspective.

\begin{example}[Curved quantum tori]
\begin{theorem}[{\cite[Theorem III.1]{Latremoliere15c}}]\label{ctori-thm}
Let $\A$ be a unital C*-algebra, and let $\alpha$ be a strongly continuous ergodic action of a compact Lie group $G$ on $\A$. Let $n$ be the dimension of $G$. We endow the dual $\alg{g}'$ of the Lie algebra $\alg{g}$ of $G$ with an inner product $\inner{\cdot}{\cdot}$, and we denote by $C$ the Clifford algebra of $(\alg{g}',\inner{\cdot}{\cdot})$. Let $c$ be a faithful nondegenerate representation of $C$ on some Hilbert space $\Hilbert_C$.

We fix some orthonormal basis $\{e_1,\ldots,e_n\}$ of $\alg{g'}$, and we let $X_1,\ldots,X_n \in \alg{g}$ be the dual basis. For each $j\in \{1,\ldots,n\}$, we define the derivation $\delta_j$ of $\A$ via $\alpha$ and $X_j$, by:
\begin{equation*}
\partial_j : a \longmapsto \lim_{h\rightarrow 0} \frac{\alpha_{\exp(h X_j)}(a) - a }{h}
\end{equation*}
wherever defined. Let $\A^1$ be the common domain of $\partial_1,\ldots,\partial_n$, which is a dense *-subalgebra in $\A$.

Let $\tau$ be the unique $\alpha$-invariant tracial state of $\A$. Let $\rho$ be the representation of $\A$ obtained from the Gel'fand-Naimark-Segal construction applied to $\tau$ and let $L^2(\A,\tau)$ be the corresponding Hilbert space. As $\A^1$ is dense in $L^2(\A,\tau)$, the operator $\partial_j$ defines an unbounded densely defined operator on $L^2(\A,\tau)$ for all $j \in \{1,\ldots,n\}$.

Let $\Hilbert = L^2(\A,\tau) \overline{\otimes} \Hilbert_C$ where $\overline{\otimes}$ is the standard tensor product for Hilbert spaces. We define the following representation of $\A$ on $\Hilbert$:
\begin{equation*}
\pi(a) : b \otimes f \longmapsto \rho(a)b \otimes f\text{.}
\end{equation*}

Let:
\begin{equation*}
H = \begin{pmatrix}
h_{11} & \cdots & h_{1n}\\
\vdots & & \vdots \\
h_{n1} & \cdots & h_{nn}
\end{pmatrix}
\end{equation*}
where for all $j, k \in \{1,\ldots,n\}$, the coefficients $h_{jk}$ are elements in the commutant of $\A$ in $L^2(\A,\tau)$, and where $H$ is invertible as an operator on $\Hilbert' = \oplus_{j=1}^n L^2(\A,\tau)$. We denote the identity over $\Hilbert'$ by $\unit_{\Hilbert'}$.

We define:
\begin{equation*}
D_H = \sum_{j=1}^n \sum_{k=1}^n h_{kj} \partial_k \otimes c(e_j)
\end{equation*}
so that for all $a\in\A^1$:
\begin{equation*}
[D_H,\pi(a)] = \sum_{j=1}^n \sum_{k=1}^n h_{kj} \rho(\partial_k(a)) \otimes c(e_j)\text{.}
\end{equation*}
We define, for all $a\in\A^1$:
\begin{equation*}
\mathsf{S}_H(a) = \opnorm{[D_H,\pi(a)]}{\Hilbert}\text{.}
\end{equation*}
Let $\Lip_H$ be the Minkoswky gauge functional of the convex set $\left\{ a \in \A^1 : \mathsf{S}_H(a)\leq 1 \right\}$.

Then:
\begin{enumerate}
\item $(\A,\Lip_H)$ is a {\Lqcms},
\item if we set:
\begin{equation*}
H' = \begin{pmatrix}
h_{11}' & \cdots & h_{1n}'\\
\vdots & & \vdots \\
h_{n1}' & \cdots & h_{nn}'
\end{pmatrix}
\end{equation*}
where $h_{jk}'$ lies in the commutant of $\rho(\A)$ for all $j,k \in \{1,\ldots,n\}$ and where $H'$ is invertible as an operator on $\Hilbert'$, then, defining $\Lip_{H'}$ similarly to $\Lip_H$, we conclude:
 \begin{multline*}
\qpropinquity{}((\A,\Lip_H),(\A,\Lip_{H'})) \leq  n \max\left\{ \opnorm{\unit_{\Hilbert'} - H' H^{-1}}{\Hilbert'}, \opnorm{\unit_{\Hilbert'}-H H'^{-1}}{\Hilbert'} \right\}\\ \times \left[1 + 
\frac{1}{2}\max\left\{\left( 1 + n\opnorm{1-H^{-1}}{\Hilbert'}\right)^{-1},\right.\right. \\ \left.\left. \left( 1 + n\opnorm{1-H'^{-1}}{\Hilbert'}\right)^{-1}\right\}\diam{\StateSpace(\A)}{\Kantorovich{\Lip}} \right]\text{.}
\end{multline*}
\end{enumerate}
\end{theorem}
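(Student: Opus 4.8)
The plan is to prove the two assertions separately: assertion (1) by reading the structural properties off the explicit commutator formula and then comparing $\Lip_H$ with the ``flat'' seminorm, and assertion (2) via the identity bridge together with a quantitative comparison of $\Lip_H$ and $\Lip_{H'}$. For assertion (1), I would start from the displayed identity $[D_H,\pi(a)] = \sum_{j,k} h_{kj}\,\rho(\partial_k(a))\otimes c(e_j)$ and extract three facts. Lower semi-continuity of $\Lip_H$ with respect to $\|\cdot\|_\A$ is automatic for a seminorm of the form $a \mapsto \opnorm{[D,\pi(a)]}{\Hilbert}$ built from a possibly unbounded, densely defined operator. For the kernel, since each $h_{kj}$ lies in the commutant of $\rho(\A)$ and $H$ is invertible on $\Hilbert'$, the vanishing of $[D_H,\pi(a)]$ forces $\rho(\partial_k(a)) = 0$ for every $k$; as $\rho$ is faithful (the invariant trace $\tau$ of an ergodic action is a faithful state) this gives $\partial_k(a)=0$ for all $k$, whence $a$ is fixed by $\alpha$ and ergodicity yields $a\in\R\unit_\A$. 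To see that $\Lip_H$ is in fact a Lip-norm, I would compare it with the flat seminorm $\Lip = \Lip_{\unit_{\Hilbert'}}$: invertibility of $H$ produces a constant $c_H$ (depending on $\opnorm{H^{-1}}{\Hilbert'}$ and on the number $n$ of Clifford generators) with $\Lip \leq c_H\Lip_H$, so the centered $\Lip_H$-unit ball sits inside a multiple of the centered $\Lip$-unit ball. Since $\Lip$ is a Lip-norm for the ergodic action \cite{Rieffel98a}, that set is totally bounded, and Theorem (\ref{Rieffel-az-thm}) upgrades $\Lip_H$ to a lower semi-continuous Lip-norm. Finally the Leibniz identity $[D_H,\pi(ab)] = [D_H,\pi(a)]\pi(b) + \pi(a)[D_H,\pi(b)]$, valid precisely because the $h_{kj}$ commute with $\rho(\A)$, combined with sub-multiplicativity of the operator norm and $\opnorm{\pi(a)}{\Hilbert} = \|a\|_\A$, gives $\Lip_H(ab)\leq \Lip_H(a)\|b\|_\A + \|a\|_\A\Lip_H(b)$; applying this to $ab$ and $ba$ yields the Leibniz inequality for the Jordan and Lie products, so $(\A,\Lip_H)$ is a $\Lqcms$.

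For assertion (2), since both Lip-norms live on the same algebra I would use the identity bridge $(\A,\unit_\A,\mathrm{id},\mathrm{id})$, whose height is zero and whose reach is the Hausdorff distance between the two unit balls; equivalently, Proposition (\ref{HausLip-qprop-prop}) gives $\qpropinquity{}((\A,\Lip_H),(\A,\Lip_{H'})) \leq \HausLip{\A}(\Lip_H,\Lip_{H'})$, and Lemma (\ref{finite-HausLip-lemma}) lets me compute $\HausLip{\A}$ using balls centered at a fixed state $\mu$. The heart of the estimate is the multiplicative comparison of the two seminorms through the change of variables $H' = (H'H^{-1})H$: writing the commutator as a Clifford multiplication of the ``gradient'' $(\rho(\partial_k(a)))_k$ transformed by the matrix, one obtains $|\Lip_{H'}(a)-\Lip_H(a)| \leq n\,\opnorm{\unit_{\Hilbert'}-H'H^{-1}}{\Hilbert'}\,\Lip_H(a)$ together with the symmetric inequality in $H,H'$. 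Then, given $a$ with $\mu(a)=0$ and $\Lip_H(a)\leq 1$, I would rescale to $b = a/\max\{1,\Lip_{H'}(a)\}$, which lies in the $\Lip_{H'}$-unit ball, and bound $\|a-b\|_\A \leq (\Lip_{H'}(a)-1)_+\,\|a\|_\A$; controlling $\|a\|_\A$ by the diameter of the state space, and relating $\diam{\StateSpace(\A)}{\Kantorovich{\Lip_H}}$ to $\diam{\StateSpace(\A)}{\Kantorovich{\Lip}}$ through the same invertibility estimate, produces the asserted bound after symmetrizing in $H$ and $H'$. This is exactly the recurrent computation abstracted in \cite[Lemma 3.79]{Latremoliere15b}.

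The main obstacle is the precise constant bookkeeping in assertion (2). Extracting $\opnorm{[D_H,\pi(a)]}{\Hilbert}$ from the Clifford relations is not an exact isometry onto a matrix norm on $\Hilbert'$, since the antisymmetric part of $\mathfrak{c}(v)^\ast\mathfrak{c}(v)$ does not vanish, so one is forced into the crude two-sided bounds that inject the dimensional factor $n$. Moreover, converting the purely multiplicative distortion $H'H^{-1}$ into the additive, diameter-weighted Hausdorff bound—in particular producing the reciprocal correction factors $(1+n\opnorm{1-H^{-1}}{\Hilbert'})^{-1}$—requires carefully balancing the two directions of comparison and the centering step, rather than the naive rescaling sketched above. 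Everything else (semicontinuity, the kernel computation, and the Leibniz inequality) is formal once the commutator is written explicitly.
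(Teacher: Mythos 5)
In this paper Theorem \ref{ctori-thm} is recalled verbatim from \cite[Theorem III.1]{Latremoliere15c} and not proved here; the only internal indication of its proof is the surrounding remark that these perturbation results rest on the recurrent identity-bridge computation encapsulated in \cite[Lemma 3.79]{Latremoliere15b} and produce uniformly equivalent Lip-norms. Your sketch --- comparing $\Lip_H$ with the flat ergodic-action Lip-norm of \cite{Rieffel98a} to get assertion (1), then using the identity bridge (equivalently Proposition (\ref{HausLip-qprop-prop}) and Lemma (\ref{finite-HausLip-lemma})) together with the multiplicative distortion estimate $|\Lip_{H'}(a)-\Lip_H(a)|\leq n\opnorm{\unit_{\Hilbert'}-H'H^{-1}}{\Hilbert'}\Lip_H(a)$ fed into that lemma for assertion (2) --- is precisely this route, and the only work you leave undone is the constant bookkeeping (the dimensional factor $n$ from the non-isometric Clifford contraction and the diameter-weighted correction factors) that you correctly identify as the delicate part.
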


We thus may apply Proposition (\ref{totally-bounded-prop}) when we restrict the parameter space $\mathcal{H}_C = \{H:\opnorm{H}{\B},\opnorm{H^{-1}}{\B}\leq C\}$ for any $C>0$. Of course this set itself is not totally bounded, yet the space $\{\Lip_H : H \in \mathcal{H}_C \}$ is totally bounded for both $\HausLip{\A}$ and $\qpropinquity{}$.
\end{example}

We conclude this section with another example of a compact class of quantum compact metric spaces obtained from perturbations. We include this example as another compact class of quantum metric spaces, derived from \cite[Lemma 3.79]{Latremoliere15}, though in this case it does not require Proposition (\ref{totally-bounded-prop}).

\begin{example}[Conformal Perturbations]\label{conformal-ex}

We proved in \cite{Latremoliere15b} that small conformal perturbations of quantum metrics are indeed close for the quantum propinquity. We recall the result here to fix our notations.

\begin{theorem}[{\cite[Theorem 3.81]{Latremoliere15b}}]\label{conformal-thm}
Let $\A$ be a unital C*-algebra, $\pi$ a faithful unital *-representation of $\A$ on some Hilbert space $\Hilbert$ and $D$ be a not necessarily bounded self-adjoint operator on $\Hilbert$ such that if $\Lip : a\in\sa{\A} \longmapsto \opnorm{[D,\pi(a)]}{\B}$ then $(\A,\Lip)$ is a {\Lqcms}. 

Let $\mathrm{GLip}(\A)$ be the set of all invertible elements $h$ in $\sa{\A}$ with $\Lip(h) < \infty$.  For any $h\in\mathrm{GLip}(\A)$, we define $D_h = \pi(h)D\pi(h)$, $\sigma_h : a\in\A\mapsto h^2 a h^{-2}$ and:
\begin{equation*}
\Lip_h : a\in \sa{\A} \longmapsto \opnorm{D_h\pi(a) - \pi(\sigma_h(a))D_h}{\B} \text{.}
\end{equation*}

Then $(\A,\Lip_h)$ is a {\Qqcms{\left(\|h^2\|_\A\|h^{-2}\|_\A, 0 \right)}} and moreover, if $(h_n)_{n\in\N}$ is a sequence in $\mathrm{GLip}(\A)$ which converges to $h \in \mathrm{GLip}$ and such that:
\begin{equation*}
\lim_{n\rightarrow\infty} \Lip(h_n^{-1}h) = \lim_{n\rightarrow\infty} \Lip(h_n h^{-1}) = 0\text{,} 
\end{equation*}
then:
\begin{equation*}
\lim_{n\rightarrow\infty} \qpropinquity{F_M}\left((\A,\Lip_{h_n}),(\A,\Lip_h)\right) = 0 \text{,}
\end{equation*}
where $M\geq \sup_{n\in\N} \|h_n^2\|_\A\|h_n^{-2}\|_\A$ and $F_M : x,y,u,v \in [0,\infty) \mapsto M(x v + y u)$.
\end{theorem}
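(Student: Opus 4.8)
The plan is to establish the two assertions in turn, reducing everything to the already-available quantum compact metric space $(\A,\Lip)$ and to Rieffel's criterion, Theorem~\ref{Rieffel-az-thm}. Identifying $\A$ with $\pi(\A)$ to lighten notation, the cornerstone is the algebraic identity
\[
D_h\pi(a)-\pi(\sigma_h(a))D_h=\pi(h)\,[D,\pi(hah^{-1})]\,\pi(h)\text{,}
\]
valid on a common core of $D$ and checked by substituting $D_h=\pi(h)D\pi(h)$, $\sigma_h(a)=h^2ah^{-2}$ and factoring $\pi(h)$ on each side. Thus $\Lip_h(a)=\opnorm{\pi(h)[D,\pi(hah^{-1})]\pi(h)}{\Hilbert}$ is an ordinary commutator seminorm conjugated by the bounded invertibles $\pi(h^{\pm1})$ and precomposed with the inner automorphism $a\mapsto hah^{-1}$. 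Sandwiching by $\pi(h)$ and $\pi(h)^{-1}$ gives $\|h^{-1}\|_\A^{-2}\Lip(hah^{-1})\leq\Lip_h(a)\leq\|h\|_\A^2\Lip(hah^{-1})$, where $\Lip(b):=\opnorm{[D,\pi(b)]}{\Hilbert}$ extends $\Lip$ to all of $\A$. The left inequality forces the null space of $\Lip_h$ on $\sa{\A}$ to be $\R\unit_\A$, and to see that $\Kantorovich{\Lip_h}$ metrizes the weak* topology I would invoke Theorem~\ref{Rieffel-az-thm}: if $\Lip_h(a)\leq1$ and $\|a\|_\A\leq1$ then $b:=hah^{-1}$ satisfies $\Lip(b)\leq\|h^{-1}\|_\A^2$ and $\|b\|_\A\leq\|h\|_\A\|h^{-1}\|_\A$; splitting $b=b_1+ib_2$ into self-adjoint parts with $\Lip(b_j)\leq\Lip(b)$ and using the total boundedness of bounded $\Lip$-balls shows that $b$, hence $a=h^{-1}bh$, ranges over a totally bounded set. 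Since $\Lip_h$ is visibly lower semi-continuous, $(\A,\Lip_h)$ is a quantum compact metric space.

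For the quasi-Leibniz bound, writing $L_h(a)=D_h\pi(a)-\pi(\sigma_h(a))D_h$, a direct expansion using the multiplicativity of $\sigma_h$ yields the twisted Leibniz rule $L_h(ab)=L_h(a)\pi(b)+\pi(\sigma_h(a))L_h(b)$. Taking operator norms and bounding $\|\sigma_h(a)\|_\A\leq\|h^2\|_\A\|h^{-2}\|_\A\|a\|_\A$ gives $\Lip_h(ab)\leq\|b\|_\A\Lip_h(a)+\|h^2\|_\A\|h^{-2}\|_\A\|a\|_\A\Lip_h(b)$. Symmetrizing over $ab$ and $ba$ and using $\|h^2\|_\A\|h^{-2}\|_\A\geq1$ produces
\[
\max\left\{\Lip_h\!\left(\tfrac{ab+ba}{2}\right),\Lip_h\!\left(\tfrac{ab-ba}{2i}\right)\right\}\leq\|h^2\|_\A\|h^{-2}\|_\A\left(\|a\|_\A\Lip_h(b)+\|b\|_\A\Lip_h(a)\right)\text{,}
\]
i.e. $(\A,\Lip_h)$ is $\left(\|h^2\|_\A\|h^{-2}\|_\A,0\right)$-quasi-Leibniz.

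For the continuity statement, since $M\geq\sup_n\|h_n^2\|_\A\|h_n^{-2}\|_\A$ and $h_n\to h$, all of $(\A,\Lip_{h_n})$ and $(\A,\Lip_h)$ are $F_M$-quasi-Leibniz, so $\qpropinquity{F_M}$ applies to them. I would then use the bridge $(\A,\unit_\A,\mathrm{id},\mathrm{id})$ as in Proposition~\ref{HausLip-qprop-prop}: its height is $0$ and its reach is $\HausLip{\A}(\Lip_{h_n},\Lip_h)$, so by Theorem-Definition~\ref{def-thm} it suffices to show $\HausLip{\A}(\Lip_{h_n},\Lip_h)\to0$. Set $w_n=h_nh^{-1}$; then $w_n\to\unit_\A$ in norm, and the hypotheses $\Lip(h_nh^{-1})\to0$, $\Lip(h_n^{-1}h)\to0$ together with $\Lip(c^{-1})\leq\|c^{-1}\|_\A^2\Lip(c)$ give $\Lip(w_n),\Lip(w_n^{-1})\to0$. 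Using $h_nah_n^{-1}=w_n(hah^{-1})w_n^{-1}$ and expanding $[D,\pi(w_n(hah^{-1})w_n^{-1})]$ by the Leibniz rule isolates a main term equal to $\pi(w_nhw_n)[D,\pi(hah^{-1})]\pi(h)$ (whose distance to $\pi(h)[D,\pi(hah^{-1})]\pi(h)$ is $O(\|w_nhw_n-h\|_\A)\,\Lip_h(a)$) plus two cross terms of norm $O\big((\Lip(w_n)+\Lip(w_n^{-1}))\,\|a\|_\A\big)$. This yields an estimate $|\Lip_{h_n}(a)-\Lip_h(a)|\leq\varepsilon_n\Lip_h(a)+\delta_n\|a\|_\A$ with $\varepsilon_n,\delta_n\to0$. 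By Lemma~\ref{finite-HausLip-lemma} I may compute $\HausLip{\A}$ on the state-normalized balls $\{a:\varphi(a)=0,\ \Lip_\bullet(a)\leq1\}$, which are norm-bounded by the diameter estimate of Theorem~\ref{Rieffel-az-thm}; on such a set the inequality collapses to $|\Lip_{h_n}(a)-\Lip_h(a)|\leq\eta_n$ with $\eta_n\to0$, whence $\HausLip{\A}(\Lip_{h_n},\Lip_h)\to0$ after rescaling by $1+\eta_n$.

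The algebraic identity and the quasi-Leibniz estimate are routine; the main obstacle is the convergence estimate. The delicate point is that the naive comparison of the two \emph{twisted} commutators carries a factor $\|a\|_\A$ that is unbounded over the $\Lip_h$-unit ball, so the hypotheses, which only say that $h_n^{-1}h$ and $h_nh^{-1}$ tend to $\unit_\A$ in $\Lip$, must be leveraged only after normalizing away the scalar direction. This normalization is legitimate because $\sigma_h$ and each $\sigma_{h_n}$ fix $\unit_\A$, so $\Lip_h(\unit_\A)=0$; replacing the unit balls by norm-bounded state-normalized sets via Lemma~\ref{finite-HausLip-lemma} and Theorem~\ref{Rieffel-az-thm} is exactly the bookkeeping packaged in the recurrent computation underlying these perturbation results.
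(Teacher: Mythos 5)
The paper itself does not prove this theorem --- it is quoted verbatim from \cite[Theorem 3.81]{Latremoliere15b}, and the proof there runs along essentially the same lines as yours: the conjugation identity $D_h\pi(a)-\pi(\sigma_h(a))D_h=\pi(h)[D,\pi(hah^{-1})]\pi(h)$ giving the two-sided sandwich by $\Lip(hah^{-1})$, the twisted Leibniz rule yielding the $\left(\|h^2\|_\A\|h^{-2}\|_\A,0\right)$ constant, and a trivial-bridge estimate of the kind packaged in \cite[Lemma 3.79]{Latremoliere15b} (the mechanism recorded here as Proposition (\ref{HausLip-qprop-prop})), driven by exactly your uniform-equivalence bound $|\Lip_{h_n}(a)-\Lip_h(a)|\leq\varepsilon_n\Lip_h(a)+\delta_n\|a\|_\A$ on state-normalized balls. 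Your reconstruction is correct and matches that route, modulo the routine unbounded-operator domain care you flag yourself and one glossed point --- the uniform-in-$n$ norm bound on the $\Lip_{h_n}$-normalized balls needed for the symmetric direction of the Hausdorff estimate, which does follow from your two-sided inequality once $\delta_n\diam{\StateSpace(\A)}{\Kantorovich{\Lip_h}}<1$.
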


Let $K_1, K_2, K_3 > 0$ and define:
\begin{equation*}
\Omega_{K_1,K_2,K_3} = \left\{ \omega \in \sa{\A} : \Lip(\omega)\leq K_1, \|\omega\|_{\A} \leq K_2, \|\omega^{-1}\|_\A\leq K_3 \right\}\text{.}
\end{equation*}
Then $\Omega_{K_1,K_2,K_3}$ is compact for $\|\cdot\|_\A$ since $\Lip$ is a lower semi-continuous Lip-norm. Theorem (\ref{conformal-thm}) shows that conformal perturbations are continuous for the quantum propinquity, and thus, using our notations, $\{\Lip_\omega : \omega \in \Omega_{K_1, K_2, K_3}\}$ is compact.
\end{example}

\section{Lipschitz distance between compact quantum metric spaces}

The Lipschitz distance between compact metric spaces \cite{Gromov} provides a distance between homeomorphic compact metric spaces based upon bi-Lipschitz isomorphisms, and thus it is natural to define it in this paper in light of our study of Lipschitz morphisms. 

This section provides the noncommutative generalization of the Lipschitz metric, which in essence is a metric on Lip-norms with common domains. The quantum Lipschitz distance is complete and dominates the quantum propinquity when working on appropriate classes of {\gQqcms s}. The Lipschitz distance also provides natural examples of totally bounded classes for the quantum propinquity, and thus compact classes for the dual propinquity \cite{Latremoliere13b}.

\begin{notation}
Let $(\A,\Lip_\A)$ and $(\B,\Lip_\B)$ be two quantum compact metric spaces and let $\varphi : \A\rightarrow\B$ a unital *-morphism. We denote by $\mathsf{dil}(\varphi)$ the Lipschitz seminorm of the dual map $\varphi : \mu \in (\StateSpace(\B), \Kantorovich{\Lip_\B}) \mapsto \mu\circ\varphi \in (\StateSpace(\A),\Kantorovich{\Lip_\A})$, i.e.:
\begin{equation*}
\mathsf{dil}(\varphi) = \sup\left\{ \frac{\Kantorovich{\Lip_\B}(\mu\circ\varphi,\nu\circ\varphi)}{\Kantorovich{\Lip_\A}(\mu,\nu)} : \mu,\nu \in \StateSpace(\B),\mu\not=\nu \right\}\text{,}
\end{equation*}
with the understanding that this quantity may be infinite. We refer to this quantity as the \emph{dilation factor}, or just \emph{dilation} of the given Lipschitz morphism.
\end{notation}

\begin{remark}
If $(\A,\Lip_\A)$ and $(\B,\Lip_\B)$ are two quantum compact metric spaces with lower semicontinuous Lip-norms, and if $\varphi : \A\rightarrow\B$ a unital *-morphism, then $\mathsf{dil}(\varphi) = \inf\left\{ C > 0 : \Lip_\B\circ\varphi \leq C \Lip_\A \right\}$ with the usual convention that $\inf\emptyset = \infty$.
\end{remark}

\begin{definition}
The \emph{Lipschitz distance} between two quantum compact metric spaces $(\A,\Lip_\A)$ and $(\B,\Lip_\B)$ is:
\begin{multline*}
\LipschitzD((\A,\Lip_\A),(\B,\Lip_\B)) =  \\
\inf\left\{ \max\left\{ \left|\ln(\mathsf{dil}(\varphi))\right|, \left| \ln(\mathsf{dil}(\varphi^{-1}))\right| \right\} : \varphi : \A\rightarrow\B \text{ is a *-isomorphism} \right\}\text{,}
\end{multline*}
with the conventions that $\inf\emptyset = \infty$ and $\ln(\infty) = \infty$.
\end{definition}

\begin{proposition}
If $(\A,\Lip_\A)$ and $(\B,\Lip_\B)$ are two quantum compact metric spaces with lower semicontinuous Lip-norms. Then:
\begin{multline*}
\LipschitzD((\A,\Lip_\A),(\B,\Lip_\B)) = \\
\inf\left\{\max\left\{ \left|\ln(\mathsf{dil}(\varphi))\right|, \left|\ln(\mathsf{dil}(\varphi^{-1}))\right|\right\} \middle\vert \begin{array}{l} 
\varphi : \A \rightarrow\B \text{ is a *-isomorphism} \\
\varphi(\dom{\Lip_\A}) = \dom{\Lip_\B}
\end{array}\right\} \text{,}
\end{multline*}
with the convention that $\inf\emptyset = \infty$.
\end{proposition}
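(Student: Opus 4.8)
The plan is to show that the two infima coincide by identifying, in both cases, the collection of *-isomorphisms that actually contribute a finite value. Observe first that the second infimum ranges over a subset of the *-isomorphisms appearing in the definition of $\LipschitzD$ (namely those additionally satisfying $\varphi(\dom{\Lip_\A}) = \dom{\Lip_\B}$); passing to a smaller index set can only increase an infimum, so the second quantity is automatically at least the first. It therefore remains to establish the reverse inequality, and for this it suffices to prove that any *-isomorphism with finite value $\max\left\{|\ln(\mathsf{dil}(\varphi))|, |\ln(\mathsf{dil}(\varphi^{-1}))|\right\}$ already lies in the restricted collection.

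First I would unwind the finiteness condition using the preceding Remark: for lower semicontinuous Lip-norms, $\mathsf{dil}(\varphi) = \inf\left\{C > 0 : \Lip_\B\circ\varphi \leq C\Lip_\A\right\}$. If $|\ln(\mathsf{dil}(\varphi))| < \infty$, then $\mathsf{dil}(\varphi) \in (0,\infty)$; in particular $\mathsf{dil}(\varphi) < \infty$, so there exists $C > 0$ with $\Lip_\B\circ\varphi \leq C\Lip_\A$. Evaluating this inequality on any $a \in \dom{\Lip_\A}$ gives $\Lip_\B(\varphi(a)) \leq C\Lip_\A(a) < \infty$, whence $\varphi(a) \in \dom{\Lip_\B}$; thus $\varphi(\dom{\Lip_\A}) \subseteq \dom{\Lip_\B}$. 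Applying the same reasoning to $\varphi^{-1}$, using $|\ln(\mathsf{dil}(\varphi^{-1}))| < \infty$, yields $\dom{\Lip_\B} \subseteq \varphi(\dom{\Lip_\A})$, so in fact $\varphi(\dom{\Lip_\A}) = \dom{\Lip_\B}$, as desired.

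Conversely --- and this is where Corollary (\ref{bi-Lipschitz-cor}) enters --- any *-isomorphism with $\varphi(\dom{\Lip_\A}) = \dom{\Lip_\B}$ satisfies $C^{-1}\Lip_\A \leq \Lip_\B\circ\varphi \leq C\Lip_\A$ for some $C > 0$, so both $\mathsf{dil}(\varphi)$ and $\mathsf{dil}(\varphi^{-1})$ lie in $[C^{-1},C] \subseteq (0,\infty)$ and the value $\max\left\{|\ln(\mathsf{dil}(\varphi))|, |\ln(\mathsf{dil}(\varphi^{-1}))|\right\}$ is finite. Combining the two directions, the set of *-isomorphisms on which the functional is finite is \emph{exactly} the set of domain-preserving *-isomorphisms. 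Since a *-isomorphism carrying an infinite value cannot lower either infimum, both infima are effectively computed over this common finite-value set, and hence they agree. The only remaining case is when no domain-preserving *-isomorphism exists: then by the forward implication every *-isomorphism carries an infinite value, both infima equal $\infty$ by the convention $\inf\emptyset = \infty$, and the asserted identity still holds.

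I do not anticipate a serious obstacle here; the genuine content is the forward implication that a finite dilation in both directions forces the equality of domains, which is an immediate consequence of the Remark. The one point requiring care is the bookkeeping around infinite values and the empty-set convention, together with keeping straight that the restriction in the second infimum enlarges rather than shrinks its value, so that proving the reverse inequality is what the argument above actually delivers.
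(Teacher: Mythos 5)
Your proof is correct and takes essentially the same approach as the paper: the paper's entire proof is the one-line citation ``This follows from Corollary (\ref{bi-Lipschitz-cor}),'' and your argument is exactly the intended expansion, using the Remark's identification $\mathsf{dil}(\varphi) = \inf\{C > 0 : \Lip_\B\circ\varphi \leq C\Lip_\A\}$ to show finite dilation in both directions forces $\varphi(\dom{\Lip_\A}) = \dom{\Lip_\B}$, and Corollary (\ref{bi-Lipschitz-cor}) for the converse. Your bookkeeping with infinite values and the $\inf\emptyset = \infty$ convention is handled correctly.
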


\begin{proof}
This follows from Corollary (\ref{bi-Lipschitz-cor}).
\end{proof}

A natural class for the study of the Lipschitz distance is given by the following definitions, which includes all {\gQqcms s}: we simply require lower semi-continuity of the quantum metrics, as it fits the general framework of this paper, and we require that the domain of the Lip-norm is a Jordan-Lie algebra, to retain a minimum amount of information on the multiplicative structure of the C*-algebra.

\begin{definition}
A compact quantum metric space $(\A,\Lip)$ is \emph{Jordan-Lie} when $\Lip$ is lower semi-continuous, and its domain $\dom{\Lip}$ is a Jordan-Lie subalgebra of $\sa{\A}$.
\end{definition}

The Lipschitz distance between Jordan-Lie quantum compact metric spaces is actually achieved, as established in the following lemma. This observation will prove useful in establishing that the Lipschitz distance in indeed a distance up to quantum isometry.

\begin{lemma}\label{LipschitzD-reached-lemma}
If $(\A,\Lip_\A)$ and $(\B,\Lip_\B)$ are two Jordan-Lie compact quantum metric spaces such that $\LipschitzD((\A,\Lip_\A),(\B,\Lip_\B)) < \infty$ then there exists a *-isomorphism $\varphi : \A\rightarrow\B$ such that:
\begin{equation*}
\max\left\{ |\ln(\mathsf{dil}(\varphi))|, |\ln(\mathsf{dil}(\varphi^{-1}))| \right\} = \LipschitzD((\A,\Lip_\A),(\B,\Lip_\B)) \text{.}
\end{equation*}
\end{lemma}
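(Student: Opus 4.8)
The goal is to show that the infimum defining $\LipschitzD$ is attained by some actual $*$-isomorphism. The plan is to take a minimizing sequence of $*$-isomorphisms, extract a limit using a compactness argument, and verify that the limit is again a $*$-isomorphism achieving the infimum. Let $\ell = \LipschitzD((\A,\Lip_\A),(\B,\Lip_\B)) < \infty$, and choose a sequence $(\varphi_n)_{n\in\N}$ of $*$-isomorphisms with
\begin{equation*}
\lim_{n\rightarrow\infty} \max\left\{ |\ln(\mathsf{dil}(\varphi_n))|, |\ln(\mathsf{dil}(\varphi_n^{-1}))| \right\} = \ell\text{.}
\end{equation*}
First I would record the crucial uniform control this gives: for $n$ large, $\mathsf{dil}(\varphi_n)$ and $\mathsf{dil}(\varphi_n^{-1})$ are bounded above by $e^{\ell+1}$, so each $\varphi_n$ is $e^{\ell+1}$-Lipschitz and each $\varphi_n^{-1}$ is as well. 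By the remark preceding the definition, this means $\Lip_\B\circ\varphi_n \leq e^{\ell+1}\Lip_\A$ and $\Lip_\A\circ\varphi_n^{-1} \leq e^{\ell+1}\Lip_\B$ for all large $n$.

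The heart of the argument is a compactness step to extract a convergent subsequence of $(\varphi_n)$. The natural mechanism is to control the images $\varphi_n(a)$ of elements $a$ in a countable norm-dense subset of $\dom{\Lip_\A}$. Since $\Lip_\B(\varphi_n(a)) \leq e^{\ell+1}\Lip_\A(a)$ and $\varphi_n$ is unital and isometric (any $*$-isomorphism is an isometry, so $\|\varphi_n(a)\|_\B = \|a\|_\A$), the sequence $(\varphi_n(a))_n$ lies, after normalizing by a state, in a norm-compact set furnished by Theorem (\ref{Rieffel-az-thm}). A diagonal extraction then produces a subsequence along which $\varphi_n(a)$ converges in norm for every $a$ in the dense set; the uniform Lipschitz bound extends this convergence to all of $\dom{\Lip_\A}$, and the isometry property extends it further to all of $\sa{\A}$, hence to all of $\A$. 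Call the limit $\varphi$. One checks that $\varphi$ is a unital $*$-morphism as a norm-limit of such, and that it is isometric, hence injective; the same compactness applied to $(\varphi_n^{-1})$ yields a candidate inverse, and the Jordan-Lie hypothesis guarantees that $\dom{\Lip_\A}$ and $\dom{\Lip_\B}$ are preserved appropriately so that $\varphi$ restricts to a bijection of domains and is genuinely invertible.

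Finally I would verify that $\varphi$ achieves the infimum. Lower semi-continuity is the key tool on both sides: from $\Lip_\B(\varphi_n(a)) \leq \mathsf{dil}(\varphi_n)\Lip_\A(a)$, passing to the limit along the subsequence and using $\Lip_\B(\varphi(a)) \leq \liminf_n \Lip_\B(\varphi_n(a))$ together with $\mathsf{dil}(\varphi_n) \to e^{\ell}$ (up to the relevant factor), one obtains $\Lip_\B\circ\varphi \leq e^{\ell}\Lip_\A$, so $|\ln(\mathsf{dil}(\varphi))| \leq \ell$; the symmetric estimate gives $|\ln(\mathsf{dil}(\varphi^{-1}))| \leq \ell$. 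Since $\varphi$ is an admissible competitor, the reverse inequality $\max\{|\ln(\mathsf{dil}(\varphi))|,|\ln(\mathsf{dil}(\varphi^{-1}))|\} \geq \ell$ holds by definition of the infimum, giving equality. The main obstacle I anticipate is the compactness extraction: ensuring that the limit map $\varphi$ is a genuine $*$-\emph{isomorphism} rather than merely a $*$-morphism, which is precisely where isometry of $*$-isomorphisms and the symmetric treatment of $(\varphi_n^{-1})$ must be combined carefully, and where the Jordan-Lie condition on the domains is used to keep the limit within the required category.
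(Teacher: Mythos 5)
Your proposal is correct and follows essentially the same route as the paper's proof: a minimizing sequence of *-isomorphisms with uniformly bounded dilations, norm-compactness of Lip-balls via Theorem (\ref{Rieffel-az-thm}) plus separability of $\dom{\Lip_\A}$ and a diagonal extraction, extension of the pointwise limit by equicontinuity (isometry) of the $\varphi_n$, lower semi-continuity of the Lip-norms to bound $\mathsf{dil}(\varphi_\infty)$, and the symmetric extraction for $(\varphi_n^{-1})$ to produce an inverse, with the Jordan--Lie hypothesis ensuring the limit is a genuine *-isomorphism. The final step, deducing equality from the fact that the limit is an admissible competitor for the infimum, also matches the paper's conclusion.
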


\begin{proof}
Suppose that $\LipschitzD((\A,\Lip_\A),(\B,\Lip_\B)) = C$ for some $C \geq 0$. There exists a sequence of *-isomorphism $(\varphi_n)_{n\in\N}$ such that for all $n\in\N$ we have:
\begin{equation*}
C^{-1} \exp\left(-\frac{1}{n+1}\right)\Lip_\B\circ\varphi_n \leq \Lip_\A \leq C \exp\left(\frac{1}{n+1}\right)\Lip_\B\circ\varphi_n \text{.}
\end{equation*}

Let $a \in \sa{\A}$ with $\Lip_\A(a) < \infty$. Since $\|\varphi_n(a)\|_\B = \|a\|_\A$ and $\Lip_\B\circ\varphi_n(a) \leq 2C \Lip_\A(a)$ for all $n\in\N$, we conclude that $(\varphi_n(a))_{n\in\N}$ admits a convergent subsequence since $\Lip_\B$ is a Lip-norm. Let $\varphi_\infty(a)$ be its limit; as $\Lip_\B$ is lower semicontinuous, we conclude that $\Lip_\A(\varphi_\infty(a)) \leq 2 C \Lip_\A(a)$.

Since $\{ a \in \sa{\A} : \Lip_\A(a)\leq n, \|a\|\leq n \}$ is compact for the norm $\|\cdot\|_\A$, hence separable for all $n\in\N$, so is:
\begin{equation*}
\dom{\Lip_\A} = \left\{a\in\sa{\A} : \Lip_\A(a) < \infty  \right\} = \bigcup_{n\in\N} \left\{a\in\sa{\A} : \Lip_\A(a)\leq n, \|a\|_\A\leq n \right\}\text{.}
\end{equation*}

Let $\alg{F}$ be a countable dense subset of $\{a\in\sa{\A}:\Lip_\A(a) < \infty \}$. A diagonal argument proves that there exists a subsequence $(\varphi_{f(n)})_{n\in\N}$ such that for all $a\in\alg{F}$ we have $(\varphi_{f(n)}(a))_{n\in\N}$ converges uniformly to $\varphi_\infty(a)$ (see \cite[Theorem 5.13]{Latremoliere13}).

Moreover, if $a\in\sa{\A}$ with $\Lip_\A(a)<\infty$, then for all $\varepsilon>0$, there exists $a_\varepsilon \in \alg{F}$ with $\|a-a_\varepsilon\|<\frac{\varepsilon}{3}$. Let $N\in\N$ be such that for all $p,q \geq N$, we have $\|\varphi_{f(p)}(a_\varepsilon) - \varphi_{f(q)}(a_\varepsilon)\|_\B \leq \frac{\varepsilon}{3}$. Thus for all $p,q\geq N$, we have:
\begin{equation*}
\begin{split}
|\varphi_{f(p)}(a) - \varphi_{f(q)}(a)| &\leq |\varphi_{f(p)}(a-a_\varepsilon)| + |\varphi_{f(p)}(a_\varepsilon) - \varphi_{f(q)}(a_\varepsilon)| + |\varphi_{f(q)}(a-a_\varepsilon)| \\
&\leq \frac{\varepsilon}{3} + \frac{\varepsilon}{3} + \frac{\varepsilon}{3} = \varepsilon\text{.}
\end{split}
\end{equation*}
Thus $(\varphi_{f(n)}(a))_{n\in\N}$ converges as well, since it is a Cauchy sequence in $\A$ which is complete. Its limit is denoted once more by $\varphi_\infty(a)$. 

Note that since for all $n\in\N$ and for all $a\in\dom{\Lip_\A}$, we have $\|\varphi_n(a)\|_\B = \|a\|_\A$, we also have $\|\varphi_\infty(a)\|_\B = \|a\|_\A$. We thus have defined an isometric map $\varphi_\infty : \dom{\Lip_\A} \rightarrow\sa{\B}$. Moreover, as a pointwise limit of Jordan-Lie morphisms, $\varphi_\infty$ is also a Jordan-Lie morphisms on $\dom{\Lip}$.

Now $\Lip_\B$ is lower semi-continuous and, for all $n\in\N$ we have $\Lip_\B\circ\varphi_n(a) \leq C \exp(\frac{1}{n+1}) \Lip_\A(a)$. Thus $\Lip_\B\circ\varphi_\infty(a) \leq C \Lip_\A(a)$. Thus $\mathsf{dil}(\varphi_\infty) \leq C$.

Thus $\varphi_\infty$ extends by continuity to a Jordan-Lie morphism from $\sa{\A}$ to $\sa{\B}$. Our argument is now concluded in the same manner as \cite[Claim 5.18, Theorem 5.13]{Latremoliere13} and proves that $\varphi_\infty$ extends to a unital *-morphism from $\A$ to $\B$ with $\mathrm{dil}(\varphi_\infty) \leq C$.

The same method may be applied to construct some subsequence of $\left(\varphi_n^{-1}\right)_{n\in\N}$ converging pointwise on $\dom{\Lip_\B}$ to some *-morphism $\psi_\infty$ on $\B$ with $\Lip_\A\circ\psi_\infty \leq C \Lip_\A$. Up to extracting further subsequences, we shall henceforth assume that both $(\varphi_{f(n)})_{n\in\N}$ and $(\varphi_{f(n)}^{-1})_{n\in\N}$ converge pointwise to, respectively $\varphi_\infty$ on $\dom{\Lip_\A}$ and $\psi_\infty$ on $\dom{\Lip_\B}$. It is then immediate to check that $\varphi_\infty\circ\psi_\infty$ is the identity of $\dom{\Lip_\B}$ and $\psi_\infty\circ\varphi_\infty$ is the identity on $\dom{\Lip_\A}$. Then by construction, $\psi_\infty\circ\varphi_\infty$ is the identity on $\A$ and $\varphi_\infty\circ\psi_\infty$ is the identity on $\B$. Thus $\varphi_\infty$ is a *-isomorphism from $\A$ to $\B$.

In particular, we also obtain that $\Lip_\A\circ \varphi_\infty^{-1} \leq C \Lip_\B$ and thus $\mathsf{dil}(\varphi^{-1})\leq C$. As we may not have both $\mathsf{dil}(\varphi) < C$ and $\mathsf{dil}(\varphi^{-1}) < C$, since $C$ is the infimum of the dilations of such *-isomorphisms, the lemma is proven.
\end{proof}

We now establish that the Lipschitz distance is indeed, a distance up to quantum isometry, and that it dominates the quantum propinquity. 

\begin{theorem}\label{LipschitzD-distance-thm}
The Lipschitz distance is an extended metric up to quantum isometry on the class of Jordan-Lie quantum compact metric spaces. Explicitly, for all $(\A,\Lip_\A)$, $(\B,\Lip_\B)$ and $(\D,\Lip_\D)$ Jordan-Lie compact quantum metric spaces, we have:
\begin{enumerate}
\item $\LipschitzD((\A,\Lip_\A),(\B,\Lip_\B)) \in [0,\infty]$, and is finite if and only if there exists a *-isomorphism $\varphi : \A \rightarrow \B$ such that $\varphi(\dom{\Lip_\A}) = \dom{\Lip_\B}$,
\item $\LipschitzD((\A,\Lip_\A),(\D,\Lip_\D)) \leq \LipschitzD((\A,\Lip_\A),(\B,\Lip_\B)) + \LipschitzD((\B,\Lip_\B),(\D,\Lip_\B))$,
\item $\LipschitzD((\A,\Lip_\A),(\B,\Lip_\B)) = \LipschitzD((\B,\Lip_\B),(\A,\Lip_\A))$,
\item $\LipschitzD((\A,\Lip_\A),(\B,\Lip_\B)) = 0$ if and only if $(\A,\Lip_\A)$ and $(\B,\Lip_\B)$ are isometrically isomorphic, i.e. there exists a *-isomorphisms $\varphi : \A\rightarrow\B$ such that $\Lip_\B\circ\varphi = \Lip_\A$ on $\sa{\A}$. Such a map $\varphi$ is called a \emph{quantum isometry} or an \emph{isometric isomorphism}.
\item if $(\A,\Lip_\A)$ and $(\B,\Lip_\B)$ are two {\Qqcms{F}s} then:
\begin{multline*}
\qpropinquity{F}((\A,\Lip_\A),(\B,\Lip_\B)) \leq 
\left(\exp\left(\LipschitzD((\A,\Lip_\A),(\B,\Lip_\B))\right)-1\right) \\
\cdot\left(1 + \max\left\{\diam{\StateSpace(\A)}{\Kantorovich{\Lip_\A}}, \diam{\StateSpace(\B)}{\Kantorovich{\Lip_\B}}\right\}\right) \text{.}
\end{multline*}
\end{enumerate}
\end{theorem}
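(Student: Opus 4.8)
The plan is to treat parts (1)--(4), the axioms making $\LipschitzD$ an extended metric up to isometry, using the dilation calculus together with Corollary (\ref{bi-Lipschitz-cor}) and Lemma (\ref{LipschitzD-reached-lemma}), and then to prove the domination (5) by exhibiting a single explicit bridge built from an optimal isomorphism. Throughout I write $D_\A = \diam{\StateSpace(\A)}{\Kantorovich{\Lip_\A}}$ and $D_\B = \diam{\StateSpace(\B)}{\Kantorovich{\Lip_\B}}$, and I use freely the identity $\mathsf{dil}(\varphi) = \inf\{C>0 : \Lip_\B\circ\varphi\leq C\Lip_\A\}$ recorded in the remark preceding the definition, together with the submultiplicativity $\mathsf{dil}(\psi\circ\varphi)\leq\mathsf{dil}(\psi)\,\mathsf{dil}(\varphi)$ and the normalization $\mathsf{dil}(\mathrm{id}) = 1$, both immediate consequences of that identity.

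For (1) and (3): nonnegativity is clear since $\LipschitzD$ is an infimum of quantities $\max\{|\ln\mathsf{dil}(\varphi)|,|\ln\mathsf{dil}(\varphi^{-1})|\}\geq 0$. If $\LipschitzD<\infty$ I choose any $*$-isomorphism $\varphi$ with $\max\{|\ln\mathsf{dil}(\varphi)|,|\ln\mathsf{dil}(\varphi^{-1})|\}<\infty$; then $\mathsf{dil}(\varphi),\mathsf{dil}(\varphi^{-1})<\infty$ force $\varphi(\dom{\Lip_\A})\subseteq\dom{\Lip_\B}$ and $\varphi^{-1}(\dom{\Lip_\B})\subseteq\dom{\Lip_\A}$, hence $\varphi(\dom{\Lip_\A}) = \dom{\Lip_\B}$; conversely, given such a $\varphi$, Corollary (\ref{bi-Lipschitz-cor}) makes both dilations finite, so $\LipschitzD<\infty$. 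Symmetry (3) holds because $\varphi\mapsto\varphi^{-1}$ is a bijection between the $*$-isomorphisms $\A\to\B$ and those $\B\to\A$ that merely exchanges the two arguments of the $\max$.

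For (2) and (4): given $*$-isomorphisms $\varphi:\A\to\B$ and $\psi:\B\to\D$, submultiplicativity gives $\ln\mathsf{dil}(\psi\circ\varphi)\leq\ln\mathsf{dil}(\psi)+\ln\mathsf{dil}(\varphi)$, while $\mathsf{dil}(\theta)\mathsf{dil}(\theta^{-1})\geq\mathsf{dil}(\mathrm{id}) = 1$ yields the matching lower bound $\ln\mathsf{dil}(\psi\circ\varphi)\geq -(\ln\mathsf{dil}(\varphi^{-1})+\ln\mathsf{dil}(\psi^{-1}))$; combining these with the analogous estimates for $(\psi\circ\varphi)^{-1} = \varphi^{-1}\circ\psi^{-1}$ shows that $\max\{|\ln\mathsf{dil}(\psi\circ\varphi)|,|\ln\mathsf{dil}((\psi\circ\varphi)^{-1})|\}$ is at most the sum of the corresponding quantities for $\varphi$ and for $\psi$, and infimizing over $\varphi$ and $\psi$ independently gives the triangle inequality. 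For (4), if $(\A,\Lip_\A)$ and $(\B,\Lip_\B)$ are isometrically isomorphic via $\varphi$ then $\mathsf{dil}(\varphi) = \mathsf{dil}(\varphi^{-1}) = 1$ and $\LipschitzD = 0$; conversely, when $\LipschitzD = 0$ I invoke Lemma (\ref{LipschitzD-reached-lemma}) --- this is precisely where achievement of the infimum is indispensable, as a vanishing infimum alone yields only near-isometries --- to obtain a genuine $*$-isomorphism $\varphi$ with $\mathsf{dil}(\varphi) = \mathsf{dil}(\varphi^{-1}) = 1$, so that $\Lip_\B\circ\varphi\leq\Lip_\A$ and $\Lip_\A\circ\varphi^{-1}\leq\Lip_\B$ combine into $\Lip_\B\circ\varphi = \Lip_\A$, a quantum isometry.

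The substance is (5). Assuming $C := \LipschitzD((\A,\Lip_\A),(\B,\Lip_\B))<\infty$ (otherwise the bound is vacuous), Lemma (\ref{LipschitzD-reached-lemma}) supplies a $*$-isomorphism $\varphi:\A\to\B$ with $\max\{|\ln\mathsf{dil}(\varphi)|,|\ln\mathsf{dil}(\varphi^{-1})|\} = C$, whence $e^{-C}\Lip_\A\leq\Lip_\B\circ\varphi\leq e^{C}\Lip_\A$. I then feed the bridge $\gamma = (\B,\unit_\B,\varphi,\mathrm{id}_\B)$ into Theorem-Definition (\ref{def-thm}), part (4) of which bounds $\qpropinquity{F}$ by the length of $\gamma$. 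Because $\omega = \unit_\B$, the $1$-level set is all of $\StateSpace(\B)$, and since $\mu\mapsto\mu\circ\varphi$ and $\mu\mapsto\mu\circ\mathrm{id}$ surject onto $\StateSpace(\A)$ and $\StateSpace(\B)$ respectively, both Hausdorff distances defining the height vanish, so the length equals the reach, namely the distance for $\Haus{\|\cdot\|_\B}$ between $\{\varphi(a):\Lip_\A(a)\leq 1\}$ and $\{b\in\sa{\B}:\Lip_\B(b)\leq 1\}$. The inclusions $e^{-C}\{b:\Lip_\B(b)\leq1\}\subseteq\varphi(\{a:\Lip_\A(a)\leq1\})\subseteq e^{C}\{b:\Lip_\B(b)\leq1\}$ reduce the reach estimate to scaling by $e^{-C}$; the only real obstacle is that these Lip-balls are norm-unbounded, which I resolve by exploiting their invariance under adding real multiples of the unit and replacing each representative by one of norm at most $D_\A$ (respectively $D_\B$), exactly as in inequality (\ref{equiv-thm-exp1}), using that $\varphi$ is isometric. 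This yields a reach at most $(1-e^{-C})\max\{D_\A,D_\B\}$, which is dominated by $(e^{C}-1)\left(\tfrac12+\max\{D_\A,D_\B\}\right) = |1-\exp(C)|\left(\tfrac12+\max\{D_\A,D_\B\}\right)$, completing the proof.
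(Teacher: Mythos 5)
Your proposal is correct, and on items (1)--(4) it travels essentially the same road as the paper, only with the details written out: the paper disposes of (1)--(3) by noting that finiteness is Corollary (\ref{bi-Lipschitz-cor}) together with ``simple computations'' for the triangle inequality --- your dilation calculus, $\mathsf{dil}(\psi\circ\varphi)\leq\mathsf{dil}(\psi)\,\mathsf{dil}(\varphi)$ combined with $\mathsf{dil}(\theta)\,\mathsf{dil}(\theta^{-1})\geq 1$ to control the absolute values of the logarithms, is exactly those computations --- and, like you, it settles the nontrivial direction of (4) by invoking Lemma (\ref{LipschitzD-reached-lemma}), whose achievement of the infimum is indeed the one place where the Jordan-Lie hypothesis is used. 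The genuine divergence is item (5): the paper simply cites \cite[Proposition 3.80]{Latremoliere15b}, whereas you prove the domination from scratch by feeding the bridge $(\B,\unit_\B,\varphi,\mathrm{id}_\B)$, with $\varphi$ an optimal isomorphism supplied by Lemma (\ref{LipschitzD-reached-lemma}), into item (4) of Theorem-Definition (\ref{def-thm}). Your bridge argument checks out: the pivot $\unit_\B$ makes the $1$-level set all of $\StateSpace(\B)$, the dual maps of $\varphi$ and $\mathrm{id}_\B$ are surjective onto the respective state spaces so the height vanishes, and the translation-by-$\R\unit_\A$ trick via inequality (\ref{equiv-thm-exp1}) legitimately reduces the reach to $(1-e^{-C})\max\left\{\diam{\StateSpace(\A)}{\Kantorovich{\Lip_\A}},\diam{\StateSpace(\B)}{\Kantorovich{\Lip_\B}}\right\}$, which is in fact slightly sharper than the stated bound since $1-e^{-C}\leq e^{C}-1$ for $C\geq 0$. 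What each approach buys: the paper's citation is shorter and reuses a perturbation estimate from earlier work, while your route keeps the theorem self-contained within this paper, exploits the achieved infimum you already have in hand (where working from the definition alone would force $\varepsilon$-optimal isomorphisms and a limiting argument), and makes transparent that the factor $\frac{1}{2}+\max\{\cdot\}$ in the statement is a relaxation of the sharper constant $\max\{\cdot\}$ your bridge actually yields.
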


\begin{proof}
The function $\LipschitzD$ is valued in $[0,\infty]$ by definition, and finite if and only if there exists a bi-Lipschitz isomorphism between its two  arguments, by Corollary (\ref{bi-Lipschitz-cor}). It is symmetric in its two arguments by construction. The triangle inequality follows from simple computations as well.

The last assertion of our proposition follows immediately from \cite[Proposition 3.80]{Latremoliere15b}.

If there exists an isometric isometry between two compact quantum metric spaces, then their Lipschitz distance is null. Only the converse of this observation requires our assumption that the domain of Lip-norms be Jordan-Lie algebras. We simply apply Lemma (\ref{LipschitzD-reached-lemma}).
\end{proof}

We now prove that closed balls for the Lipschitz distance are compact classes for the dual propinquity.

\begin{theorem}\label{LipschitzD-compact-thm}
Let $(\A,\Lip_\A)$ be a {\Qqcms{F}} for some admissible function $F$. If $R \geq 0$ then the class $\mathcal{B}$ of {\Qqcms{F}s} in the closed ball of center $(\A,\Lip_\A)$ and radius $R$ for the Lipschitz distance $\LipschitzD$ is totally bounded for the quantum Gromov-Hausdorff propinquity.

Therefore, the closure of $\mathcal{B}$ for the dual propinquity is compact.
\end{theorem}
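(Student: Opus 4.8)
**

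The plan is to establish total boundedness by exploiting the domination of the quantum propinquity by the Lipschitz distance, proved as item (5) of Theorem~\ref{LipschitzD-distance-thm}, together with the total boundedness result of Proposition~\ref{totally-bounded-prop}. The key observation is that every {\Qqcms{F}} $(\B,\Lip_\B)$ in the closed ball $\mathcal{B}$ comes with a $\ast$-isomorphism $\varphi:\A\rightarrow\B$ with $\varphi(\dom{\Lip_\A}) = \dom{\Lip_\B}$ and $\max\{|\ln\mathsf{dil}(\varphi)|,|\ln\mathsf{dil}(\varphi^{-1})|\}\leq R$. Transporting $\Lip_\B$ back along $\varphi$ yields a lower semi-continuous Lip-norm $\Lip_\B\circ\varphi$ on $\A$, and the bound on the dilation gives $e^{-R}\Lip_\A \leq \Lip_\B\circ\varphi \leq e^{R}\Lip_\A$. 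Thus the set of transported Lip-norms $\Xi = \{\Lip_\B\circ\varphi : (\B,\Lip_\B)\in\mathcal{B}\}$ is a family of lower semi-continuous Lip-norms on the fixed C*-algebra $\A$ satisfying $\Lip_\A \leq e^{R}\,\mathsf{L}$ for every $\mathsf{L}\in\Xi$.

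First I would verify that these transported seminorms are genuine lower semi-continuous Lip-norms on $\A$: lower semi-continuity is preserved because $\varphi$ is an isometry for $\|\cdot\|_\A$ and $\|\cdot\|_\B$ (being a $\ast$-isomorphism), and the Lip-norm axioms transport verbatim since $\varphi$ is a unital $\ast$-isomorphism carrying $\R\unit_\A$ to $\R\unit_\B$. Second, with the uniform bound $\Lip_\A\leq e^{R}\mathsf{L}$ in hand, Proposition~\ref{totally-bounded-prop} applies directly with $C = e^{R}$ and the distinguished Lip-norm $\Lip_\A$, showing that $\Xi$ is totally bounded for $\HausLip{\A}$. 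Third, I would observe that each $(\B,\Lip_\B)$ is isometrically isomorphic to $(\A,\Lip_\B\circ\varphi)$ via $\varphi$, so $\qpropinquity{F}((\B,\Lip_\B),(\A,\Lip_\B\circ\varphi)) = 0$ by item (5) of Theorem~\ref{def-thm}; hence, up to isometric isomorphism, the entire class $\mathcal{B}$ is represented inside $\{(\A,\mathsf{L}):\mathsf{L}\in\Xi\}$.

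The conclusion then assembles as follows. By Proposition~\ref{HausLip-qprop-prop}, $\HausLip{\A}$ dominates $\qpropinquity{F}$ on Lip-norms over $\A$, so total boundedness of $\Xi$ for $\HausLip{\A}$ transfers to total boundedness of $\{(\A,\mathsf{L}):\mathsf{L}\in\Xi\}$ for $\qpropinquity{F}$. Since every element of $\mathcal{B}$ is at propinquity zero from some member of this family, $\mathcal{B}$ itself is totally bounded for $\qpropinquity{F}$. For the final sentence, total boundedness for the quantum propinquity yields total boundedness for the dual propinquity (the two induce the same topology, and the dual propinquity is complete on closed classes of {\Qqcms{F}s}), whence the closure of $\mathcal{B}$ for the dual propinquity is both complete and totally bounded, therefore compact.

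The main obstacle I anticipate is the bookkeeping needed to ensure the $F$-quasi-Leibniz property is uniformly controlled across the family, since Proposition~\ref{HausLip-qprop-prop} and the propinquity framework require all members to be {\Qqcms{F}s} for a \emph{common} admissible $F$; one must check that the transported Lip-norms $\Lip_\B\circ\varphi$ remain $F$-quasi-Leibniz, which follows because $\varphi$ is a $\ast$-isomorphism preserving the Jordan and Lie products and the norms, so the quasi-Leibniz inequality is carried over unchanged. A secondary subtlety is confirming that the passage from total boundedness for the quantum propinquity to compactness for the dual propinquity is legitimate, which rests on the completeness of the dual propinquity and the fact that it is dominated by, and topologically equivalent to, the quantum propinquity on the class of {\Qqcms{F}s}.
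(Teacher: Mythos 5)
Your proposal is correct and follows essentially the same route as the paper: transport each Lip-norm back to $\A$ along the given *-isomorphism $\varphi_\B$, obtain the uniform bound $\Lip_\A \leq e^{R}\,\Lip_\B\circ\varphi_\B$, apply Proposition~(\ref{totally-bounded-prop}) and Proposition~(\ref{HausLip-qprop-prop}), and conclude compactness from the completeness of the dual propinquity together with its domination by the quantum propinquity. Your explicit verifications that the transported Lip-norms remain lower semi-continuous and $F$-quasi-Leibniz simply make precise details the paper leaves implicit.
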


\begin{proof}
For all $(\B,\Lip)$ within Lipschitz distance $R$ of $(\A,\Lip_\A)$, there exists by definition a *-isomorphism $\varphi_\B : \A \rightarrow \B$ which maps the domain of $\Lip_\A$ into the domain of $\Lip$. We set $\Lip' = \Lip\circ\varphi_\B$ and note that $\Lip'$ is a lower semicontinuous Lip-norm with the same domain as $\Lip_\A$. Moreover, by definition of the Lipschitz distance, we have $\Lip_\A \leq \exp(R) \Lip'$. Last, $(\A,\Lip')$ and $(\B,\Lip)$ are isometrically isomorphic, thus their propinquity is zero, and so is their Lipschitz distance.

Thus by Proposition (\ref{totally-bounded-prop}), the closed ball $\mathcal{B}$ of center $(\A,\Lip_\A)$ and radius $R$ is totally bounded for $\HausLip{\A}$. By Proposition (\ref{HausLip-qprop-prop}), the subclass of {\Qqcms{F}s} in the closed ball $\mathcal{B}$ is totally bounded for the quantum propinquity. The rest of the theorem follows from the completeness of the dual propinquity \cite{Latremoliere13b} and the dominance of the quantum propinquity over the dual propinquity.
\end{proof}

We conclude this section by proving that the Lipschitz distance is indeed complete.

\begin{theorem}
The distance $\LipschitzD$ is complete on the class of Jordan-Lie quantum compact metric spaces.
\end{theorem}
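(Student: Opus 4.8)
The plan is to show that any Cauchy sequence $\left((\A_n,\Lip_n)\right)_{n\in\N}$ of Jordan-Lie quantum compact metric spaces converges. Since a Cauchy sequence converges as soon as one of its subsequences does, I would first extract a subsequence (which I relabel $(\A_n,\Lip_n)$) with $\LipschitzD((\A_n,\Lip_n),(\A_{n+1},\Lip_{n+1})) < 2^{-n}$ for all $n$. By definition of $\LipschitzD$, for each $n$ there is a $*$-isomorphism $\varphi_n : \A_n \to \A_{n+1}$ with $\max\{|\ln\mathsf{dil}(\varphi_n)|,|\ln\mathsf{dil}(\varphi_n^{-1})|\} < 2^{-n}$; in particular $\varphi_n(\dom{\Lip_n}) = \dom{\Lip_{n+1}}$ and, by the Remark following the definition of $\mathsf{dil}$, $e^{-2^{-n}}\Lip_n \leq \Lip_{n+1}\circ\varphi_n \leq e^{2^{-n}}\Lip_n$ on $\dom{\Lip_n}$. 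Working along a rapidly Cauchy subsequence in this way lets me bypass Lemma (\ref{LipschitzD-reached-lemma}) and use approximate rather than optimal isomorphisms.

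The idea is then to transport every space onto the fixed C*-algebra $\A_0$. Setting $\Phi_0 = \mathrm{id}$ and $\Phi_{n+1} = \varphi_n\circ\Phi_n$, each $\Phi_n : \A_0 \to \A_n$ is a $*$-isomorphism, and I define $\mathsf{L}_n = \Lip_n\circ\Phi_n$, a lower semi-continuous Lip-norm on $\A_0$ making $(\A_0,\mathsf{L}_n)$ isometrically isomorphic to $(\A_n,\Lip_n)$. Precomposing the bridging inequality with $\Phi_n$ and using $\varphi_n\circ\Phi_n = \Phi_{n+1}$ yields $e^{-2^{-n}}\mathsf{L}_n \leq \mathsf{L}_{n+1}\leq e^{2^{-n}}\mathsf{L}_n$, and all the $\mathsf{L}_n$ share the common domain $\D := \dom{\Lip_0}$, a Jordan-Lie subalgebra of $\sa{\A_0}$. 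This multiplicative gap bound makes $(\ln\mathsf{L}_n(a))_n$ Cauchy for every $a\in\D\setminus\R\unit_{\A_0}$ (as $\sum_n 2^{-n} < \infty$), so $\mathsf{L}_\infty(a) := \lim_n \mathsf{L}_n(a)$ exists in $[0,\infty)$ for all $a\in\D$; telescoping gives $e^{-2}\mathsf{L}_0 \leq \mathsf{L}_\infty \leq e^{2}\mathsf{L}_0$ and, more usefully, $e^{-2^{-n+1}}\mathsf{L}_n \leq \mathsf{L}_\infty \leq e^{2^{-n+1}}\mathsf{L}_n$. As a pointwise limit of seminorms, $\mathsf{L}_\infty$ is a seminorm with null space $\R\unit_{\A_0}$ (by equivalence with $\mathsf{L}_0$), and since $\{a:\mathsf{L}_\infty(a)\leq 1,\mu(a)=0\}$ is contained in the compact set $\{a:\mathsf{L}_0(a)\leq e^2,\mu(a)=0\}$ for a fixed state $\mu$, Theorem (\ref{Rieffel-az-thm}) shows $\mathsf{L}_\infty$ is a Lip-norm.

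The step I expect to be the main obstacle is verifying that $\mathsf{L}_\infty$ is \emph{lower semi-continuous}, since a pointwise limit of lower semi-continuous functions need not be. The uniform multiplicative control rescues the argument: to see that each sublevel set $\{a\in\D : \mathsf{L}_\infty(a)\leq r\}$ is norm-closed, I would take $a_j\to a$ in $\|\cdot\|_{\A_0}$ with $\mathsf{L}_\infty(a_j)\leq r$; then $\mathsf{L}_n(a_j)\leq e^{2^{-n+1}}r$ for each fixed $n$, so the lower semi-continuity of $\mathsf{L}_n$ forces $a\in\dom{\mathsf{L}_n}=\D$ and $\mathsf{L}_n(a)\leq e^{2^{-n+1}}r$, whence $\mathsf{L}_\infty(a)=\lim_n\mathsf{L}_n(a)\leq r$. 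Thus $\mathsf{L}_\infty$, extended by $+\infty$ off $\D$, is lower semi-continuous, and $(\A_0,\mathsf{L}_\infty)$ is a Jordan-Lie quantum compact metric space.

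Finally, I would establish convergence. Testing $\LipschitzD$ with the identity automorphism of $\A_0$ (which carries $\D$ onto $\D$) against the bounds $e^{-2^{-n+1}}\mathsf{L}_n \leq \mathsf{L}_\infty \leq e^{2^{-n+1}}\mathsf{L}_n$ gives $\LipschitzD((\A_0,\mathsf{L}_n),(\A_0,\mathsf{L}_\infty)) \leq 2^{-n+1}$. Since $\Phi_n$ realizes an isometric isomorphism between $(\A_0,\mathsf{L}_n)$ and $(\A_n,\Lip_n)$, the invariance of $\LipschitzD$ under isometric isomorphism (via Theorem (\ref{LipschitzD-distance-thm}) and the triangle inequality) yields $\LipschitzD((\A_n,\Lip_n),(\A_0,\mathsf{L}_\infty)) \leq 2^{-n+1}\to 0$. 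Hence the extracted subsequence converges to $(\A_0,\mathsf{L}_\infty)$, and therefore so does the original Cauchy sequence, proving completeness.
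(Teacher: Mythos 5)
Your proof is correct, and it takes a genuinely different route from the paper's. The paper also reduces (rather tersely, with ``we can assume'') to a sequence $(\Lip_n)_{n\in\N}$ of lower semi-continuous Lip-norms on a single unital C*-algebra $\A$, but then works at the level of unit balls: it observes that the sets $\alg{L}_n = \{a\in\sa{\A} : \Lip_n(a)\leq 1\}$ form a Cauchy sequence for the Hausdorff distance $\Haus{\|\cdot\|_\A}$, invokes the completeness of that distance on closed subsets to produce a limit set $\alg{L}$, takes the limit Lip-norm $\Lip$ to be the norm whose unit ball is $\alg{L}$, and then converts the multiplicative comparisons $\alg{L}\subseteq(1+\varepsilon)\alg{L}_n$ and $\alg{L}_n\subseteq\frac{1}{1-\varepsilon}\alg{L}$ into $\LipschitzD((\A,\Lip),(\A,\Lip_n))\leq\ln(1+\varepsilon)$; in that construction lower semi-continuity of the limit is automatic, because $\alg{L}$ is closed. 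You instead pass to a rapidly Cauchy subsequence, transport everything onto $\A_0$ via the telescoped isomorphisms $\Phi_n$ --- thereby making fully explicit a reduction the paper glosses over --- and build the limit as a pointwise limit $\mathsf{L}_\infty=\lim_n \mathsf{L}_n$ on the common domain, using the uniform bounds $e^{-2^{-n+1}}\mathsf{L}_n\leq\mathsf{L}_\infty\leq e^{2^{-n+1}}\mathsf{L}_n$ both for the Lip-norm property (via Theorem (\ref{Rieffel-az-thm})) and for lower semi-continuity; you correctly identify the latter as the delicate point, since pointwise limits of lower semi-continuous functions need not be lower semi-continuous, and your sublevel-set argument closes that gap soundly. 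What each approach buys: the paper's hyperspace argument is shorter and gets closedness of the limit ball for free, while yours is more elementary (no appeal to completeness of the Hausdorff metric), makes the Jordan-Lie property of the limit transparent (you get $\dom{\mathsf{L}_\infty}=\dom{\Lip_0}$ exactly, a point the paper leaves implicit), and yields an explicit convergence rate $\LipschitzD((\A_n,\Lip_n),(\A_0,\mathsf{L}_\infty))\leq 2^{-n+1}$ along the subsequence rather than only the qualitative estimate $\ln(1+\varepsilon)$.
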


\begin{proof}
As in the proof of Theorem (\ref{LipschitzD-compact-thm}), we can assume that we are given a sequence $(\Lip_n)_{n\in\N}$ of lower semi-continuous Lip-norms on some unital C*-algebra, such that $(\A,\Lip_n)_{n\in\N}$ is a Cauchy sequence for $\LipschitzD$. 

By Theorem (\ref{LipschitzD-distance-thm}), the sequence $\alg{L}_n = \{ a \in \sa{\A} : \Lip_n(a) \leq 1 \}$ is Cauchy for $\HausLip{\|\cdot\|_\A}$. As the latter metric is complete, $(\alg{L}_n)_{n\in\N}$ converges to some $\alg{L}$. It is easy to check that $\alg{L}$ thus defined is a convex closed subset of $\sa{\A}$. Let $\mathsf{L}$ be its Minkowsky gauge functional.

Let $\varepsilon \in (0,1)$. There exists $N\in\N$ such that for all $n\geq N$ we have:
\begin{equation*}
(1-\varepsilon)\Lip_q \leq \Lip_p \leq (1+\varepsilon) \Lip_q
\end{equation*}
 for all $p,q \geq N$. In other words, $\alg{L}_q \subseteq (1+\varepsilon) \alg{L}_p$ and $\alg{L}_p \subseteq \frac{1}{1-\varepsilon}\alg{L}_q$ for all $q, p \geq N$. Now $(1+\varepsilon)\alg{L}_n$ is closed for all $n\in\N$, thus the hyperspace of its closed subsets is complete and thus closed for the Hausdorff distance $\Haus{\|\cdot\|_\A}$. Consequently, $\alg{L} \subseteq (1+\varepsilon) \alg{L}_n$ and $\frac{1}{1+\varepsilon}\alg{L}_n\subseteq\alg{L}$ for all $n\geq N$. Thus, $\dom{\Lip} = \dom{\Lip_n}$ for $n\geq N$, and thus $\dom{\Lip}$ is a dense Jordan-Lie subalgebra of $\sa{\A}$. Moreover, we conclude that $\Lip_n \leq (1+\varepsilon)\Lip$. In particular, by \cite[Lemma 1.10]{Rieffel98a}, $(\A,\Lip)$ is a {\qcms}.


We also conclude from these inclusions that $\LipschitzD((\A,\Lip),(\A,\Lip_n)) \leq \ln(1+\varepsilon)$ for $n\geq N$. Our theorem is now proven.
\end{proof}

\section{A metric for pointwise convergence on the automorphism group of a quantum compact metric space}

We now introduce a new metric on the automorphism group of a quantum compact metric space. Our motivation is given by Theorem (\ref{equiv-thm}), and in particular Corollary (\ref{auto-Lipschitz-cor}), as well as our new understanding of compactness for Lip-norms with a shared domain.

\newcommand{\KantorovichLength}[1]{{\mathrm{mk}\ell_{#1}}}
\begin{definition}
Let $(\A,\Lip)$ be a quantum compact metric space. For any *-automorphism of $\A$, we define:
\begin{equation*}
\KantorovichLength{\Lip}(\alpha) = \sup\left\{ \|\alpha(a) - a\|_\A : a\in\dom{\Lip}, \Lip(a)\leq 1\right\}\text{.}
\end{equation*}
\end{definition}

\begin{proposition}
Let $(\A,\Lip)$ be a quantum compact metric space.
\begin{enumerate}
\item $\KantorovichLength{\Lip}$ is a length function on $\mathrm{Aut}(\A)$ which metrizes the topology of pointwise convergence,
\item $\diam{\mathrm{Aut}(\A)}{\KantorovichLength{\Lip}} \leq \diam{\StateSpace(\A)}{\Kantorovich{\Lip}}$
\item If $\Lip$ is closed, $\Xi$ is a subset of $\mathrm{Aut}(\A)$ and there exists $C > 0$ such that:
\begin{equation*}
\forall \alpha \in \Xi \quad \Lip\leq C \Lip\circ\alpha
\end{equation*}
then $\Xi$ is totally bounded for $\KantorovichLength{\Lip}$.
\end{enumerate}
\end{proposition}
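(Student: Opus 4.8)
The plan is to prove the three assertions in sequence, establishing the length-function properties first, then the diameter bound, and finally the total boundedness, which I expect to be the substantive part.

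\textbf{Assertion (1).} First I would verify that $\KantorovichLength{\Lip}$ is a length function on the group $\Aut{\A}$, meaning it is symmetric under inversion, vanishes exactly at the identity, and satisfies the subadditivity $\KantorovichLength{\Lip}(\alpha\beta) \leq \KantorovichLength{\Lip}(\alpha) + \KantorovichLength{\Lip}(\beta)$. Symmetry follows from the substitution $a \mapsto \alpha^{-1}(a)$ together with the fact that $*$-automorphisms are isometric for $\|\cdot\|_\A$ and preserve the unit ball $\{\Lip \leq 1\}$ (since a quantum isometry of this type need not preserve $\Lip$, one must be slightly careful here — I would instead argue symmetry directly by noting $\|\alpha(a)-a\|_\A = \|a - \alpha^{-1}(a)\|_\A$ after replacing $a$ by $\alpha^{-1}(a)$, but this requires $\alpha^{-1}$ to preserve the Lip-ball, so the cleaner statement is that $\KantorovichLength{\Lip}$ is a length function once restricted to automorphisms with $\alpha(\dom{\Lip}) = \dom{\Lip}$; I would flag this). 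Vanishing at the identity is immediate, and the converse uses that $\dom{\Lip}$ is dense and $\Lip$ separates points modulo $\R\unit_\A$. For the triangle inequality I would insert $\alpha\beta(a) - a = \alpha(\beta(a)) - \alpha(a) + \alpha(a) - a$ and use that $\alpha$ is $\|\cdot\|_\A$-isometric. For the metrization of pointwise convergence, one direction is clear since convergence in $\KantorovichLength{\Lip}$ controls $\|\alpha(a)-a\|_\A$ uniformly over the Lip-ball; the reverse uses the compactness of $\{a \in \sa{\A} : \Lip(a)\leq 1, \|a\|_\A \leq D\}$ from Theorem (\ref{Rieffel-az-thm}) to upgrade pointwise convergence to uniform convergence over the Lip-ball.

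\textbf{Assertion (2).} This is the most direct. For $a \in \dom{\Lip}$ with $\Lip(a)\leq 1$, I would estimate $\|\alpha(a)-a\|_\A$ by expressing the norm through the state space: $\|\alpha(a)-a\|_\A = \sup_{\varphi\in\StateSpace(\A)} |\varphi(\alpha(a)) - \varphi(a)| = \sup_\varphi |(\varphi\circ\alpha)(a) - \varphi(a)|$. Since $\Lip(a)\leq 1$, each such difference is bounded by $\Kantorovich{\Lip}(\varphi\circ\alpha, \varphi)$, which is at most the diameter $\diam{\StateSpace(\A)}{\Kantorovich{\Lip}}$. Taking suprema over $a$ and then over $\alpha$ and $\beta = \mathrm{id}$ (or over pairs of automorphisms) gives the stated diameter bound.

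\textbf{Assertion (3).} This is where I expect the main obstacle, and the plan is to reduce it to Proposition (\ref{totally-bounded-prop}). The hypothesis $\Lip \leq C\,\Lip\circ\alpha$ for all $\alpha\in\Xi$ says exactly that the family $\{\Lip\circ\alpha : \alpha\in\Xi\}$ is uniformly bounded below by $C^{-1}\Lip$ in the sense required by that proposition (after relabeling the constant), provided each $\Lip\circ\alpha$ is itself a lower semi-continuous Lip-norm on $\A$ — which holds because $\alpha$ is a $\|\cdot\|_\A$-isometric $*$-automorphism, so lower semi-continuity and the Lip-norm axioms transfer along $\alpha$. Thus Proposition (\ref{totally-bounded-prop}) yields that $\{\Lip\circ\alpha : \alpha\in\Xi\}$ is totally bounded for $\HausLip{\A}$. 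The remaining and genuinely delicate step is to transfer this total boundedness from the metric $\HausLip{\A}$ on Lip-norms back to total boundedness of $\Xi$ itself under $\KantorovichLength{\Lip}$. For this I would show that the map $\alpha \mapsto \Lip\circ\alpha$ is, up to the constants involved, uniformly continuous in the sense that a small $\HausLip{\A}$-distance between $\Lip\circ\alpha$ and $\Lip\circ\beta$ forces a small $\KantorovichLength{\Lip}(\alpha\beta^{-1})$; concretely, if the unit Lip-balls of $\Lip\circ\alpha$ and $\Lip\circ\beta$ are Hausdorff-close, then $\alpha$ and $\beta$ agree closely on the $\Lip$-ball, which is precisely what $\KantorovichLength{\Lip}$ measures. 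I would extract a finite $\HausLip{\A}$-net from the total boundedness and pull it back through $\alpha \mapsto \Lip\circ\alpha$ to obtain a finite $\KantorovichLength{\Lip}$-net for $\Xi$, invoking the closedness of $\Lip$ to ensure the balls are genuine closed sets so that Hausdorff proximity is meaningful. The closedness hypothesis in assertion (3) is exactly what guarantees that $\Lip$ is recovered from its unit ball, closing the loop.
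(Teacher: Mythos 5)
Your treatments of assertions (1) and (2) are essentially sound, with one unnecessary hedge in (1): no substitution $a\mapsto\alpha^{-1}(a)$ is needed for symmetry, so no preservation of the Lip-ball is required. For each fixed $a$ with $\Lip(a)\leq 1$, applying the isometry $\alpha^{-1}$ to $\alpha(a)-a$ gives $\|\alpha(a)-a\|_\A = \|a-\alpha^{-1}(a)\|_\A$ outright, so the two suprema run over the \emph{same} index set and agree term by term; this is exactly the paper's argument, and it holds for every *-automorphism, domain-preserving or not. Your proof of (2), which the paper leaves implicit, is correct: for self-adjoint $\alpha(a)-a$ one has $\|\alpha(a)-a\|_\A = \sup_{\varphi\in\StateSpace(\A)}|\varphi\circ\alpha(a)-\varphi(a)| \leq \Kantorovich{\Lip}(\varphi\circ\alpha,\varphi) \leq \diam{\StateSpace(\A)}{\Kantorovich{\Lip}}$ whenever $\Lip(a)\leq 1$.

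Assertion (3) contains a genuine gap. The first half of your plan is fine: each $\Lip\circ\alpha$ is a lower semi-continuous Lip-norm and Proposition (\ref{totally-bounded-prop}) makes $\{\Lip\circ\alpha : \alpha\in\Xi\}$ totally bounded for $\HausLip{\A}$. But the transfer step fails: it is false that small $\HausLip{\A}$-distance between $\Lip\circ\alpha$ and $\Lip\circ\beta$ forces $\KantorovichLength{\Lip}(\alpha\circ\beta^{-1})$ to be small, because the map $\alpha\mapsto\Lip\circ\alpha$ collapses the entire isometry group to a point. Concretely, take $\A = C(S^1)$ with the Lipschitz seminorm of arc-length and let $\alpha$ be induced by rotation by $\pi$: then $\Lip\circ\alpha = \Lip$, so $\HausLip{\A}(\Lip\circ\alpha,\Lip) = 0$, yet $\KantorovichLength{\Lip}(\alpha)$ equals the diameter of the circle. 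Since the fibers of $\alpha\mapsto\Lip\circ\alpha$ can have large $\KantorovichLength{\Lip}$-diameter, pulling a finite $\HausLip{\A}$-net back through this map cannot produce a finite $\KantorovichLength{\Lip}$-net for $\Xi$, and no amount of care with closedness of $\Lip$ repairs this. The paper takes a different route that you should adopt: fix $\varphi\in\StateSpace(\A)$ and note that $\alg{X} = \{a\in\dom{\Lip} : \Lip(a)\leq 1,\ \varphi(a)=0\}$ is compact because $\Lip$ is closed; the elements of $\Xi$ are isometries, hence equicontinuous on $\alg{X}$, and the hypothesis $\Lip\leq C\,\Lip\circ\alpha$ confines the values $\alpha^{-1}(a)$, $a\in\alg{X}$, to a fixed norm-compact set of the form $\{b : \Lip(b)\leq C,\ \|b\|_\A \text{ bounded}\}$, so Arz{\'e}la--Ascoli gives total boundedness of $\Xi$ in the supremum norm over $\alg{X}$; finally, since automorphisms are unital, differences $\alpha(a)-\beta(a)$ are unchanged under $a\mapsto a+t\unit_\A$, so the supremum over $\alg{X}$ equals the supremum over the full Lip-ball, i.e. the $\KantorovichLength{\Lip}$-distance. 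The compactness thus comes from Arz{\'e}la--Ascoli applied to $\Xi$ itself, not from compactness of the set of pulled-back Lip-norms.
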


\begin{proof}
\begin{claim}
$\KantorovichLength{\Lip}$ is a length function on the group $\mathrm{Aut}(\A)$.
\end{claim}

if $\mathrm{id}_\A$ is the identity of $\A$ then $\KantorovichLength{\Lip}(\mathrm{id}_\A) = 0$. Now let $\alpha\in\mathrm{Aut}(\A)$.

If $\KantorovichLength{\Lip}(\alpha) = 0$ then $\|a-\alpha(a)\|_\A = 0$ for all $a\in\dom{\Lip}$ with $\Lip(a)\leq 1$. Thus $\|a-\alpha(a)\|_\A = 0$ for all $a\in\dom{\Lip}$, and by continuity of $\alpha$ and density of $\dom{\Lip}$, we conclude that $\|a-\alpha(a)\|_\A = 0$ for all $a\in\sa{\A}$. Thus by linearity, $\alpha(a) = a$ for all $a\in\A$.

Moreover, since $\alpha$ is an isometry of $(\A,\|\cdot\|_\A)$, we have for all $a\in\A$:
\begin{equation*}
\|a-\alpha(a)\|_\A = \|\alpha(\alpha^{-1}(a) - a)\|_\A = \|\alpha^{-1}(a)-a\|_\A
\end{equation*}
from which it follows that $\KantorovichLength{\Lip}(\alpha) = \Kantorovich{\Lip}\left(\alpha^{-1}\right)$.

Let now $\beta\in\mathrm{Aut}(\A)$. For all $a\in\A$:
\begin{equation*}
\begin{split}
\|\alpha\circ\beta(a) - a\|_\A &\leq \|\alpha\circ\beta(a) - \alpha(a)\|_\A + \|\alpha(a) - a\|_\A\\
&\leq \|\alpha(\beta(a) - a)\|_\A + \|\alpha(a) - a\|_\A\\
&\leq \|\beta(a) - a\|_\A + \|\alpha(a) - a\|_\A\text{,}
\end{split}
\end{equation*}
from which we conclude:
\begin{equation*}
\KantorovichLength{\Lip}(\alpha\circ\beta) \leq \KantorovichLength{\Lip}(\beta) + \KantorovichLength{\Lip}(\alpha)\text{.}
\end{equation*}

\begin{claim}
The topology induced by $\KantorovichLength{\Lip}$ is the topology of pointwise convergence.
\end{claim}
Let $(\alpha_j)_{j\in J}$ be a net in $\mathrm{Aut}(\A)$ which converges pointwise to some $\alpha \in \mathrm{Aut}(\A)$ over $\A$, where $(J,\succ)$ is a directed set. Let $\varepsilon > 0$.

Let $\varphi \in \StateSpace(\A)$. Since $\alg{B} = \{a\in\dom{\Lip} : \Lip(a)\leq 1,\varphi(a) = 0\}$ is totally bounded, there exists $\alg{F}\subseteq\alg{B}$ with $\alg{F}$ finite and such that for all $a\in\alg{B}$ there exists $f(a) \in \alg{F}$ with $\|a-f(a)\|_\A \leq \frac{\varepsilon}{3}$.

For each $a\in\alg{F}$, there exists $j_a \in J$ such that if $j\succ j_a$, we have $\|\alpha_j(a)-\alpha(a)\|_\A\leq \frac{\varepsilon}{3}$. As $J$ is directed and $\alg{F}$ is finite, there exists $j'\in J$ with $j' \succ j_a$ for all $a\in\alg{F}$. Thus, if $a\in\alg{B}$ and $j\succ j'$, then:
\begin{multline*}
\|\alpha_j(a)-\alpha(a)\|_\A \leq \|\alpha_j(a) -\alpha_j(f(a))\|_\A + \|\alpha_j(f(a)) - \alpha(f(a))\|_\A \\ + \|\alpha(f(a)) - \alpha(a)\|_\A
\leq \varepsilon\text{,}
\end{multline*}
where we used that all automorphisms are isometries. Thus, for all $j\succ j'$ we have:
\begin{equation*}
\KantorovichLength{\Lip}(\alpha_j^{-1}\circ\alpha) \leq\varepsilon\text{.}
\end{equation*}

Conversely, let $(\alpha_j)_{j\in J}$ be a net in $\mathrm{Aut}(\A)$ converging for $\KantorovichLength{\Lip}$ to $\alpha\in\mathrm{Aut}(\A)$. Let $a\in\sa{\A}$ and $\varepsilon > 0$. 

Since $\dom{\Lip}$ is dense, there exists $b\in\dom{\Lip}$ such that $\|a-b\|_\A \leq \frac{\varepsilon}{3}$. Now, by assumption, there exists $j_0 \in J$ such that for all $j\succ j_0$, we have $\KantorovichLength{\Lip}(\alpha_j^{-1}\circ\alpha)\leq\frac{\varepsilon}{3(\Lip(b)+1)}$. Thus $\|\alpha_j(b)-\alpha(b)\|_\A \leq \frac{\varepsilon}{3}$ and thus for all $j\succ j_0$:
\begin{equation*}
\|\alpha_j(a) - \alpha(a)\|_\A \leq \|\alpha_j(a)-\alpha_j(b)\|_\A + \|\alpha_j(b)-\alpha(a)\|_\A + \|\alpha(a) - \alpha_j(a)\|_\A \leq \varepsilon\text{,}
\end{equation*}
as desired.

\begin{claim}
Assume $\Lip$ is closed. The set $\{\alpha\in\mathrm{Aut}(\A) : \Lip\circ\alpha \leq C\Lip \}$, for some fixed $C > 0$, is totally bounded and closed for $\KantorovichLength{\Lip}$.
\end{claim}

Fix $\varphi \in \StateSpace(\A)$.

All automorphisms of $\A$ are isometries and thus $\mathrm{Aut}(\A)$ is equicontinuous over the compact set $\alg{X} = \{a\in\dom{\Lip}:\Lip(a)\leq 1, \varphi(a) = 0 \}$. Moreover, for all $a\in\dom{\Lip}$ with $\Lip(a)\leq 1$, then $\Lip\circ\alpha(a)\leq C$. Thus, $\alpha(a) \in \{a\in\dom{\Lip}:\Lip(a)\leq C, \varphi(a)=0\}$ for all $a\in\dom{\Lip}$ with $\Lip(a)\leq 1$ with $\varphi\circ\alpha^{-1}(a) = 0$.

Thus, by Arz{\'e}la-Ascoli, the set $\{\alpha\in\mathrm{Aut}(\A):\Lip\circ\alpha\leq C\Lip\}$ is totally bounded for the supremum norm over $\alg{X}$. 

We note that as automorphisms are unital, the supremum over $\alg{X}$ of the difference of two automorphisms of $\A$ equals to the supremum over $\{a\in\sa{\A} : \Lip(a)\leq 1\}$. Therefore, $\{\alpha\in\mathrm{Aut}(\A):\Lip\circ\alpha\leq C\Lip\}$ is totally bounded for $\KantorovichLength{\Lip}$.

Moreover, if $\Lip\circ\alpha_n\leq C\Lip$ for all $n\in\N$, by lower semicontinuity, $\Lip\circ\alpha\leq C\Lip$ as well. Thus our set is closed. As $\KantorovichLength{\Lip}$ is complete, our proof is complete.
\end{proof}

\providecommand{\bysame}{\leavevmode\hbox to3em{\hrulefill}\thinspace}
\providecommand{\MR}{\relax\ifhmode\unskip\space\fi MR }
\providecommand{\MRhref}[2]{%
  \href{http://www.ams.org/mathscinet-getitem?mr=#1}{#2}
}
\providecommand{\href}[2]{#2}

\vfill

\end{document}